\let\scr\mathscr
\renewcommand{\leq}{\leqslant}
\renewcommand{\geq}{\geqslant}
\newcommand*{\bigcorr@macro}[2]{\sbox{0}{\mbox{$#1($}}\dimen0=\ht0
                \advance\dimen0 by \dp0
                \multiply\dimen0 by #2 \divide\dimen0 by 100}
\newcommand*{\bigcorr@big}[2]{\mbox{$#1\left#2\bigcorr@macro{#1}{85}\vrule
                   height \dimen0 depth 0pt width 0pt\right.\n@space$}}
\newcommand*{\bigcorr@Big}[2]{\mbox{$#1\left#2\bigcorr@macro{#1}{115}\vrule
                   height \dimen0 depth 0pt width 0pt\right.\n@space$}}
\newcommand*{\bigcorr@bigg}[2]{\mbox{$#1\left#2\bigcorr@macro{#1}{145}\vrule
                   height \dimen0 depth 0pt width 0pt\right.\n@space$}}
\newcommand*{\bigcorr@Bigg}[2]{\mbox{$#1\left#2\bigcorr@macro{#1}{175}\vrule
                   height \dimen0 depth 0pt width 0pt\right.\n@space$}}
\DeclareRobustCommand*{\big}[1]{{\mathpalette\bigcorr@big{#1}}}
\DeclareRobustCommand*{\Big}[1]{{\mathpalette\bigcorr@Big{#1}}}
\DeclareRobustCommand*{\bigg}[1]{{\mathpalette\bigcorr@bigg{#1}}}
\DeclareRobustCommand*{\Bigg}[1]{{\mathpalette\bigcorr@Bigg{#1}}}
\DeclareRobustCommand*{\bil}[1]{\mathopen{\bi{#1}}}
\DeclareRobustCommand*{\bir}[1]{\mathclose{\bi{#1}}}
\newtheorem{lemma}{Lemma}
\newtheorem{theorem}{Theorem}
\newtheorem{definition}{Definition}
\theoremstyle{remark}
\def\BB{\mathbb{B}}
\def\DD{\mathbb{D}}
\def\KK{\mathbb{K}}
\def\SS{\mathbb{S}}
\def\UU{\mathbb{U}}
\def\1{\mbox{1\hspace{-.25em}I}}
\newcommand{\Liminf}{\mathop{\underline{\lim}}\limits}
\newcommand{\Ex}{{\mathbf{E}}}
\newcommand{\Pb}{{\mathbf{P}}}
\let\bil\mathopen
\let\bir\mathclose
\begin{document}
\title{Localization of two radioactive sources on the plane}
\author[1]{O.V.~Chernoyarov}
\author[2]{S.~Dachian\thanks{Corresponding author: Serguei.Dachian@univ-lille.fr}}
\author[3]{C.~Farinetto}
\author[4]{Yu.A.~Kutoyants}
\affil[1,4]{\small National Research University ``MPEI'', Moscow, Russia}
\affil[2]{\small University of Lille, Lille, France}
\affil[3,4]{\small Le Mans University, Le Mans, France}
\affil[1,4]{\small Tomsk State University, Tomsk, Russia}

\date{}
\maketitle

\begin{abstract}
The problem of localization on the plane of two radioactive sources by $K$
detectors is considered.  Each detector records a realization of an
inhomogeneous Poisson process whose intensity function is the sum of signals
arriving from the sources and of a constant Poisson noise of known intensity.
The time of the beginning of emission of the sources is known, and the main
problem is the estimation of the positions of the sources.  The properties of
the maximum likelihood and Bayesian estimators are described in the
asymptotics of large signals in three situations of different regularities of
the fronts of the signals: smooth, cusp-type and change-point type.

\end{abstract}
\noindent MSC 2000 Classification: 62M02,  62G10, 62G20.

\noindent {\sl Key words}: \textsl{Poisson process, parameter
  estimation, cusp-type singularity.}

\section{Introduction}

Suppose that there are two radioactive sources and $K$ detectors on the plane.
The sources start emitting at a known time, which can be taken $t=0$ without
loss of generality.  The detectors receive Poisson signals with additive
noise.  The intensity functions of these processes depend on the positions of
the detectors (known) and the positions of the sources (unknown), and the main
problem is the estimation of the positions of the sources.  An example of a
possible configuration of the sources and of the detectors on the plane is
given in~Fig.~\ref{2sour}.

\begin{figure}[!ht]
\centering
\includegraphics[width=0.5\textwidth]{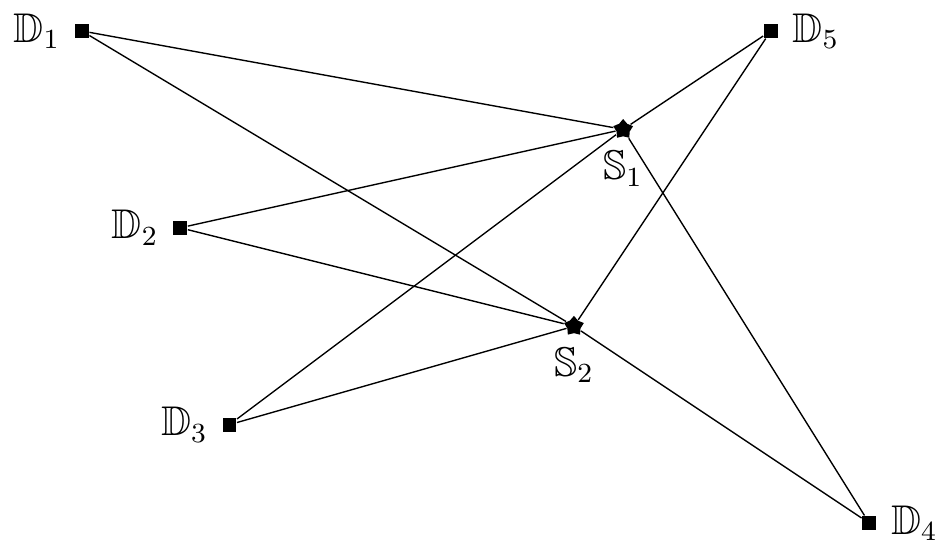}
\caption{Model of observations: $\SS_1$, $\SS_2$ are the positions of the
  sources and $\DD_i$, $i=1,\ldots,5$, are the positions of the sensors}
\label{2sour}
\end{figure}

Due to their practical importance, the problems of localization of the sources
with Poisson, Gaussian or more general classes of distributions are widely
studied in engineering literature, see,
e.g.,~\cite{BMBD15,CHY02,DLF12,Pu09,St10,W15,Kn10,WZZ}, as well as the
Handbook~\cite{ZB19} and the references therein.  Mathematical statements are
less known.  This work is a continuation of a study initiated in~\cite{FKT20}
and then developed in~\cite{AK19,CDFK22,CDK20,CK20,Kut20}, where it was always
supposed that there is only one source on the plane.  In the
works~\cite{CDK20,CK20,FKT20} it was supposed that the moment of the beginning
of the emission $\tau_0$ is known and the unknown parameter was just the
position of the source.  The case where $\tau_0$ is unknown too and we have to
estimate both~$\tau_0$ and the position of the source was treated
in~\cite{AK19,CDFK22}.  In all these works the properties of the maximum
likelihood estimator (MLE) and of the Bayesian estimators (BEs) of the
position (or of $\tau_0$ and of the position) where described.  Moreover, the
properties of the least squares estimators where equally described.  The
inhomogeneous Poisson processes and the diffusion processes were considered as
models of observations.  The properties of the MLE and of the BEs were
described in the asymptotics of large signals or in the asymptotics of small
noise.

In the present work we suppose that there are $K$ detectors and two
radioactive sources emitting signals which can be described as inhomogeneous
Poisson processes and the emission starts at the (known) moment $t=0$.  So, we
need to estimate the positions of the sources only.  As in all preceding
works, the properties of the estimators are described with the help of the
Ibragimov-Khasminskii approach (see~\cite{IH81}), which consists in the
verification of certain properties of the normalized likelihood ratio process.

The information about the positions of the sources is contained in the times
of arrival of the signals to the detectors.  These times depend on the
distances between the sources and the detectors.  The estimation of the
positions depend on the estimation of these times, and here the form of the
fronts of the arriving signals plays an important role.  We consider three
types of fronts: smooth, cusp-type and change-point type.  In the smooth case
the Fisher information matrix is finite and the estimators are asymptotically
normal.  In the cusp-case the fronts are described by a function which is
continuous, but the Fisher information does not exist (is infinite).  For
Poisson processes, statisticaal problems with such type singularities was
first considered in~\cite{D03}.  In this case the Bayesian estimators converge
to a random vector defined with the help of some functionals of fractional
Brownian motions.  In the change-point case with discontinuous intensities the
limit distribution of Bayesian estimators is defined with the help of some
Poisson processes.  In all the three cases we discuss the asymptotic
efficiency of the proposed estimators.

Special attention is paid to the condition of identifiability, i.e., to the
description of the admissible configurations of detectors which allow the
consistent estimation of the positions of the sources.  It is shown that if
the detectors do not lay on a cross, then it is impossible to find two
different pairs of sources which provide the same moments of arrival to the
detectors and hence the consistent estimation is possible.

\section{Main results}

\subsection{Model of observations}

We suppose that there are $K$ detectors $\DD_1,\ldots,\DD_K$ located on the
plane at the (known) points $D_k=(x_k,y_k)^\top$, $k=1,\ldots,K$ ($K\geq 4$),
and two sources $\SS_1$ and $\SS_2$ located at the (unknown) points
$S_1=(x_1',y_1')^\top$ and $S_2=(x_2',y_2')^\top$.  Therefore, the unknown
parameter is $\vartheta =(x_1',y_1',x_2',y_2')^\top$, but it will be
convenient to write it as $\vartheta = (\vartheta_1,\ldots,\vartheta
_4)^\top\in\Theta$ and as $\vartheta = (\vartheta^{(1)},\vartheta^{(2)})^\top$
with obvious notations.  For the true value of $\vartheta$ we will often use
the notations $\vartheta_0 = (\vartheta_0^{(1)},\vartheta_0^{(2)})^\top$,
$\vartheta_0^{(1)} = (\vartheta_1^\circ,\vartheta_2^\circ)^\top =
(x_1'^\circ,y_1'^\circ)^\top$, $\vartheta_0^{(2)} =
(\vartheta_3^\circ,\vartheta_4^\circ)^\top = (x_2'^\circ,y_2'^\circ)^\top$.
The set $\Theta$ is an open, bounded, convex subset of~$\mathcal{R}^4$.  We
suppose, of course, that the positions of the detectors are all different.  We
suppose as well that a position of a source does not coincide with a position
of a detector.

The sources start emitting at the moment $t=0$.  The $k$-th detector records a
realization $X_k=(X_k(t),\ 0\leq t\leq T)$ of an inhomogeneous Poisson process
of intensity function
\[
\lambda _{k,n}(\vartheta,t) = n S_{1,k}(\vartheta^{(1)},t) + n
S_{2,k}(\vartheta^{(2)},t) + n\lambda_0,\qquad 0\leq t\leq T,
\]
where $n\lambda_0>0$ is the intensity of the Poisson noise and $n
S_{i,k}(\vartheta^{(i)},t)$ is the signal recorded from the $i$-th source,
$i=1,2$.  We suppose that the recorded signals have the following structure
\begin{equation}
\label{signals}
S_{i,k}(\vartheta^{(i)},t) = \psi\bigl(t-\tau_{k}(\vartheta^{(i)})\bigr)
S_{i,k}(t),
\end{equation}
where $S_{i,k}(t)>0 $ is a bounded function and $\tau_{k}(\vartheta^{(i)})$ is
the time of the arrival of the signal from the $i$-th source to the $k$-th
detector, i.e.,
\[
\tau_{k}(\vartheta^{(i)}) = \nu^{-1} {\bil\|D_k-S_i\bir\|}_2 = \nu^{-1}
\bigl(\bil|x_k-x_i'\bir|^2+\bil|y_k-y_i'\bir|^2\bigr)^{1/2} ,\qquad i=1,2,
\]
and $\nu>0$ is the rate of the propagation of the signals.  Here and in the
sequel, we denote ${\bil\|\cdot\bir\|}_2$ and ${\bil\|\cdot\bir\|}_4$ the
Euclidean norms in $\mathcal{R}^2$ and $\mathcal{R}^4$ respectively.

The function $\psi(\cdot)$ describes the fronts of the signals.  As in our
preceding works (see, e.g.,~\cite{CDFK22}) we take
\[
\psi(s)=\Bigl|\frac{s}{\delta}\Bigr|^\kappa \1_{\{0\leq s\leq \delta\}}
+\1_{\{s>\delta\}},
\]
where $\delta>0$ and the parameter $\kappa \geq 0$ describes the regularity of
the statistical problem.  If~$\kappa \geq \frac{1}{2}$, we have a regular
statistical experiment, if $\kappa \in \bigl(0,\frac{1}{2}\bigr)$, we have a
singularity of cusp type, and if $\kappa=0$, the intensity is a discontinuous
function and we have a change-point model.  The examples of these three cases
are given in Fig.~\ref{F1}, where we put $n S_{i,k}(t)\equiv 2$,
$n\lambda_0=1$ and \textbf{a)} $\kappa=1$, \textbf{b)} $\kappa=1/4$,
\textbf{c)} $\kappa=0$.

\begin{figure}[!ht]
\centering
\includegraphics[width=0.7\textwidth]{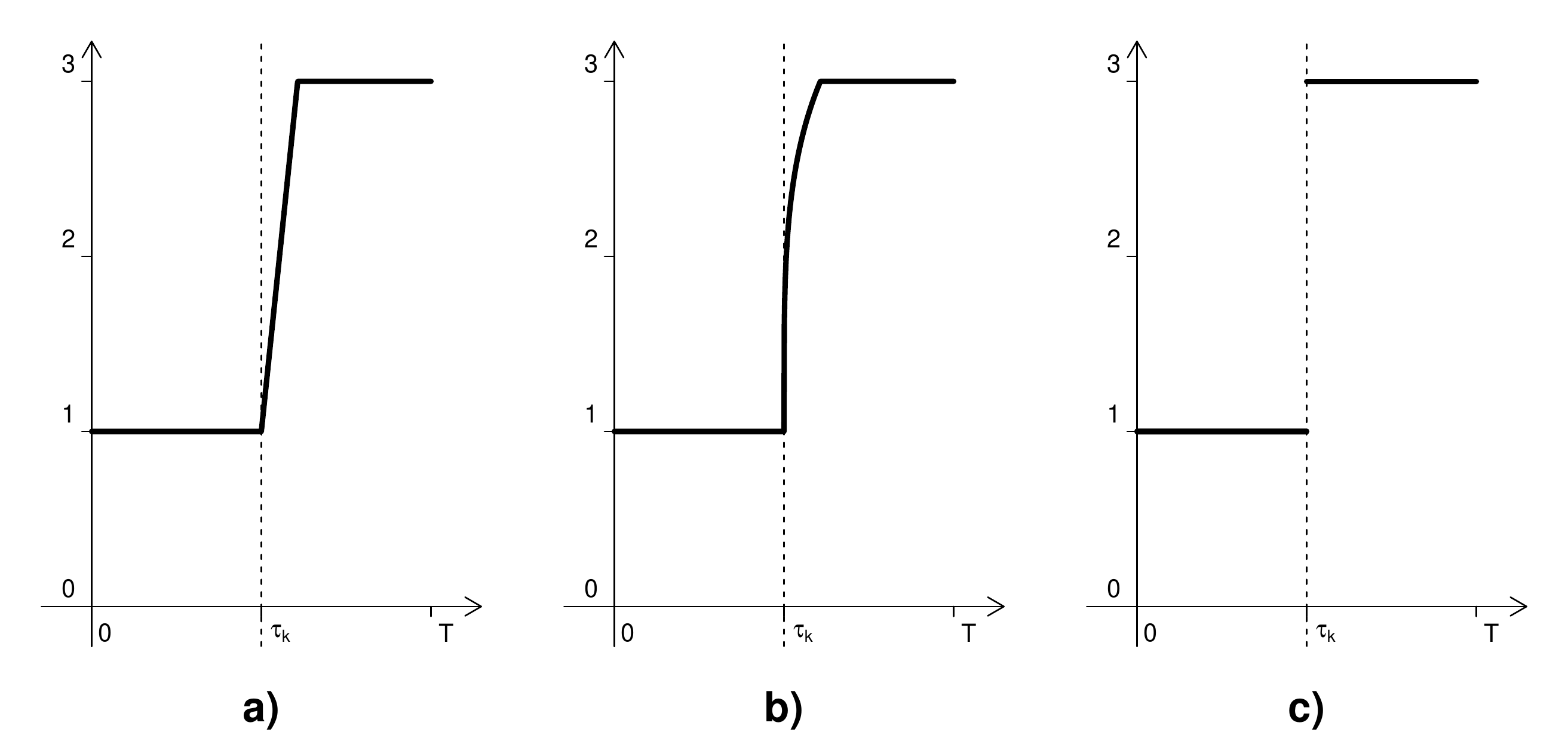}
\caption{Intensities with three types of fronts of arriving signals}
\label{F1}
\end{figure}
  
Note that $\psi(s)=0$ for $s<0$, and therefore for
$t<\tau_{k}\bigl(\vartheta^{(1)}\bigr) \wedge
\tau_{k}\bigl(\vartheta^{(2)}\bigr)$ the intensity function is
$\lambda_{k,n}(\vartheta,t)=n\lambda_0$.  According to the form of the
intensity function, all the information concerning the positions of the
sources is clearly contained in the moments of arrival
$\tau_{k}(\vartheta^{(i)})$, $i=1,2$, $k=1,\ldots,K$.

We are interested in the situation where the errors of estimation are small.
In the problem of localization it is natural to suppose that the registered
intensities take large values.  Therefore we study the properties of the
estimators of the positions in the asymptotics of large intensities, that is
why we introduce in the intensity functions the factor $n$ and the asymptotics
corresponds to the limit $n\rightarrow\infty$.  In Section 3 we explain that
the condition $K\geq 4$ is necessary for the existence of consistent
estimators.

\subsection{Maximum likelihood and Bayesian estimators}

As the intensity functions $\lambda _{k,n}\left(\cdot \right)$ are bounded and
separated from zero, the measures $\Pb_\vartheta^{(n)}$, $\vartheta\in\Theta$,
induced by the observations $X^K = (X_1,\ldots,X_K)$ on the space of the
realizations are equivalent, and the likelihood ratio function is
\[
L\bigl(\vartheta,X^K\bigr) = \exp\Biggl\{\sum_{k=1}^{K}\int_{0}^{T}
\ln\frac{\lambda_{k,n}(\vartheta,t)}{n\lambda_0}\,\mathrm{d}X_k(t) -
\sum_{k=1}^{K}\int_{0}^{T} [\lambda_{k,n}(\vartheta,t) -
  n\lambda_0]\mathrm{d}t\Biggr\}, \quad \vartheta \in\Theta
\]

The MLE $\hat\vartheta_n$ and the BEs for quadratic loss function
$\tilde\vartheta_n$ are defined by the usual relations
\[
L\bigl(\hat\vartheta_n,X^K\bigr) = \sup_{\vartheta\in\Theta}
L\bigl(\vartheta,X^K\bigr),\qquad \qquad \tilde\vartheta_n =
\frac{\int_{\Theta} \vartheta\, p(\vartheta)
  L\bigl(\vartheta,X^K\bigr)\,\mathrm{d}\vartheta}{\int_{\Theta} p(\vartheta)
  L\bigl(\vartheta,X^K\bigr)\,\mathrm{d}\vartheta}\,,
\]
where $p(\vartheta)$, $\vartheta\in\Theta$, is a strictly positive and
continuous \emph{a priori} density of the (random) parameter $\vartheta$.

\subsubsection{Smooth fronts}

First we consider the regular case in a slightly different setup, which can be
seen as more general.  The intensities of the observed processes are supposed
to be
\[
\lambda_{k,n}(\vartheta,t) = n S_{1,k}(\vartheta^{(1)},t) + n
S_{2,k}(\vartheta^{(2)},t) + n\lambda_0 = n\lambda_{k}(\vartheta,t),\qquad
\quad 0\leq t\leq T,
\]
where $\lambda_{k}(\vartheta,t)$ is defined by the last equality and
$S_{i,k}(\vartheta^{(i)},t) = s_{i,k}\bigl(t-\tau_k(\vartheta^{(i)})\bigr)$,
$i=1,2$, $k=1,\ldots,K$.

For the derivatives we have the expressions
\[
\frac{\partial s_{1,k}\bigl(t-\tau_k(\vartheta^{(1)})\bigr)}{\partial
  \vartheta _1} = \nu ^{-1}
s_{1,k}'\bigl(t-\tau_k(\vartheta^{(1)})\bigr)\,\frac{x_k-x_1'}{\rho_{1,k}} =
\nu ^{-1} s_{1,k}'\bigl(t-\tau_k(\vartheta^{(1)})\bigr)\, \cos(\alpha_{1,k}),
\]
where $\rho_{1,k}={\bil\|D_k-S_1\bir\|}_2$ and
$\cos(\alpha_{1,k})=(x_k-x_1')\rho_{1,k}^{-1}$.  Similarly, we obtain
\[
\frac{\partial s_{1,k}\bigl(t-\tau_k(\vartheta^{(1)})\bigr)}{\partial
  \vartheta_2} = \nu^{-1} s_{1,k}'\bigl(t-\tau_k(\vartheta^{(1)})\bigr)\,
\sin(\alpha_{1,k})
\]
and, of course, we have $\partial s_{1,k}/\partial \vartheta _3 = \partial
s_{1,k}/\partial \vartheta _4=0$.  Let us recall here that we use the
notations $\vartheta = (\vartheta^{(1)},\vartheta^{(2)})^\top$,
$\vartheta^{(1)} = (\vartheta_1,\vartheta_2)^\top$ and
$\vartheta^{(2)}=(\vartheta_3,\vartheta_4)^\top$.  For $\partial
s_{2,k}/\partial \vartheta_3$ and $\partial s_{2,k}/\partial \vartheta_4$ we
obtain similar expressions:
\begin{align*}
\frac{\partial s_{2,k}\bigl(t-\tau_k(\vartheta^{(2)})\bigr)}{\partial
  \vartheta_3} &= \nu ^{-1} s_{2,k}' \bigl(t-\tau_k(\vartheta^{(2)})\bigr)\,
\cos(\alpha_{2,k}),\\*
\frac{\partial s_{2,k}\bigl(t-\tau_k(\vartheta^{(2)})\bigr)}{\partial
  \vartheta_4} &= \nu ^{-1} s_{2,k}' \bigl(t-\tau_k(\vartheta^{(2)})\bigr)\,
\sin(\alpha_{2,k}).
\end{align*}

Let us now introduce the two vectors $m_{1,k} = m_{1,k}(\vartheta^{(1)}) =
\bigl(\cos(\alpha_{1,k}),\sin(\alpha_{1,k})\bigr)^\top$ and $m_{2,k} =
m_{2,k}(\vartheta^{(2)}) =
\bigl(\cos(\alpha_{2,k}),\sin(\alpha_{2,k})\bigr)^\top$.  We have
${\bil\|m_{i,k}\bir\|}_2=1$, $i=1,2$.  We will several times use the expansion
(below $u^{(1)}=(u_1,u_2)^\top$, $u^{(2)}=(u_3,u_4)^\top$ and $\varphi_n
\rightarrow 0$)
\[
\tau_{k}(\vartheta_0 ^{(i)}+u^{(i)}\varphi _n) = \tau_{k}(\vartheta_0^{(i)}) -
\nu^{-1} \langle m_{i,k}^\circ,u^{(1)}\rangle\, \varphi_n + O(\varphi_n^2),
\]
where $m_{i,k}^\circ=m_{i,k}(\vartheta_0^{(i)}) =
\bigl(\cos(\alpha_{1,k}^\circ),\sin(\alpha_{1,k}^\circ)\bigr)^\top$.

For simplicity of exposition we will sometimes use the notations $\tau_{1,k} =
\tau_k(\vartheta^{(1)})$ and $\tau_{2,k} = \tau_k(\vartheta^{(2)})$.
 
The Fisher information matrix is
\[
\mathrm{I}(\vartheta)_{4\times 4} = \sum_{k=1}^{K}\int_{0}^{T}
\frac{\dot\lambda_{k}(\vartheta,t)\,
  \dot\lambda_{k}(\vartheta,t)^\top}{\lambda_{k}(\vartheta,t)}\,\mathrm{d}t.
\]
Here and in the sequel dot means derivative w.r.t.~$\vartheta $.  The elements
of this matrix have the following expressions
\begin{align*}
\mathrm{I}(\vartheta)_{11} &= \sum_{k=1}^{K}\int_{\tau_{1,k}}
\frac{s_{1,k}'^2(t-\tau_{1,k}) \cos^2(\alpha_{1,k})}{\nu ^2
  \lambda_k(\vartheta,t)}\, \mathrm{d}t,\\*
\mathrm{I}(\vartheta)_{12} &= \sum_{k=1}^{K}\int_{\tau_{1,k}}
\frac{s_{1,k}'^2(t-\tau_{1,k}) \cos(\alpha_{1,k}) \sin(\alpha_{1,k})}{\nu ^2
  \lambda_k(\vartheta,t)}\, \mathrm{d}t,\\
\mathrm{I}(\vartheta)_{13} &= \sum_{k=1}^{K}\int_{\tau_{1,k} \vee \tau_{2,k}}
\frac{s_{1,k}'(t-\tau _{1,k})\, s_{2,k}'(t-\tau_{2,k}) \cos(\alpha _{1,k})
  \cos(\alpha_{2,k})}{\nu ^2 \lambda_k(\vartheta,t)}\, \mathrm{d}t,\\*
\mathrm{I}(\vartheta)_{14} &= \sum_{k=1}^{K}\int_{\tau_{1,k} \vee \tau_{2,k}}
\frac{s_{1,k}'(t-\tau _{1,k})\, s_{2,k}'(t-\tau_{2,k}) \cos(\alpha_{1,k})
  \sin(\alpha_{2,k})}{\nu ^2 \lambda_k(\vartheta,t)}\, \mathrm{d}t.
\end{align*}
The other terms can be written in a similar way.

The regularity conditions are:

\noindent\textit{Conditions\/ $\mathscr{R}$.}
\begin{description}
\itshape
\item[$\mathscr{R}_1$.]For all\/ $i=1,2$ and $k=1,\ldots,K$, the functions\/
  $s_{i,k}(t)=0$ for\/ $t\leq 0$ and\/ $s_{i,k}(t)>0$ for\/ $t>0$.
\item[$\mathscr{R}_2$.]The functions\/ $s_{i,k}(\cdot) \in \mathcal{C}^{2}$,
  $i=1,2$, $k=1,\ldots,K$.  The set\/ $\Theta \subset \mathcal{R}^4$ is open,
  bounded and convex.
\item[$\mathscr{R}_3$.]The Fisher information matrix is uniformly non degenerate
\[
\inf_{\vartheta \in \Theta}\ \inf_{{\bil\|e\bir\|}_4=1} e^\top\,
\mathrm{I}(\vartheta)\, e>0,
\]
where $e\in\mathcal{R}^4$.
\item[$\mathscr{R}_4$.](\textbf{Identifiability}) For any\/ $\varepsilon>0$,
  we have
\[
\inf_{\vartheta_0 \in \Theta}\ \inf_{{\bil\|\vartheta
    -\vartheta_0\bir\|}_4>\varepsilon} \sum_{k=1}^{K}\int_{0}^{T}
\bigl[S_{1,k}(\vartheta^{(1)},t) + S_{2,k}(\vartheta^{(2)},t) -
  S_{1,k}(\vartheta_0^{(1)},t) - S_{2,k}(\vartheta_0^{(2)},t)\bigr]^2\,
\mathrm{d}t>0.
\]
\end{description}

Let us note that in the setup of this section, the identifiability condition
rewrites as follows:
\begin{equation}
\label{r4}
\inf_{\vartheta_0 \in \Theta}\ \inf_{{\bil\|\vartheta
    -\vartheta_0\bir\|}_4>\varepsilon} \sum_{k=1}^{K}\int_{0}^{T}
\bigl[s_{1,k}(t-\tau_{1,k}) + s_{2,k}(t-\tau_{2,k}) -
  s_{1,k}(t-\tau_{1,k}^\circ) - s_{2,k}(t-\tau_{2,k}^\circ)\bigr]^2\,
\mathrm{d}t>0,
\end{equation}
where $\tau_{i,k}^\circ = \tau_{k}(\vartheta_0^{(i)})$, $i=1,2$.

It can be shown that if the conditions $\mathscr{R}_1$--$\mathscr{R}_3$ are
fulfilled, the family of measures $\bigl(\Pb_\vartheta^{(n)},
\vartheta\in\Theta\bigr)$ is locally asymptotically normal (LAN) (see
Lemma~2.1 of~\cite{Kut98}), and therefore we have the Hajek-Le Cam's lower
bound on the risks of an arbitrary estimator $\bar\vartheta_n$ (see,
e.g.,~\cite{IH81})
\begin{equation}
\label{ng}
\lim_{\varepsilon \rightarrow 0}\ \Liminf_{n\rightarrow
  \infty}\ \sup_{{\bil\|\vartheta -\vartheta_0\bir\|}_4 \leq \varepsilon} n\,
\Ex_\vartheta {\bil\|\bar\vartheta_n-\vartheta\bir\|}_4^2 \geq
\Ex_{\vartheta_0} {\bil\|\zeta\bir\|}_4^2 ,\qquad \zeta \sim
\mathcal{N}\bigl(0,\mathrm{I}(\vartheta_0)^{-1}\bigr).
\end{equation}
The asymptotically efficient estimator is defined as an estimator for which
there is equality in the inequality~\eqref{ng} for all $\vartheta_0\in\Theta$.

\begin{theorem}
Let the conditions\/ $\mathscr{R}$ be fulfilled.  Then, uniformly on
compacts\/ $\KK\subset\Theta$, the MLE\/ $\hat\vartheta_n$ and the BEs
$\tilde\vartheta_n$ are consistent and asymptotically normal:
\[
\sqrt{n}\, \bigl(\hat\vartheta_n-\vartheta_0\bigr) \Longrightarrow
\zeta,\qquad\qquad \sqrt{n}\, \bigl(\tilde\vartheta_n-\vartheta_0\bigr)
\Longrightarrow \zeta,
\]
the polynomial moments converge: for any\/ $p>0$, it holds
\[
n^{p/2}\, \Ex_{\vartheta_0}{\bil\|\hat\vartheta_n-\vartheta_0\bir\|}_4^p
\rightarrow \Ex_{\vartheta_0}{\bil\|\zeta\bir\|}_4^p,\qquad n^{p/2}\,
\Ex_{\vartheta_0} {\bil\|\tilde\vartheta_n-\vartheta_0\bir\|}_4^p \rightarrow
\Ex_{\vartheta _0}{\bil\|\zeta\bir\|}_4^p,
\]
and both the MLE and the BEs are asymptotically efficient.
\end{theorem}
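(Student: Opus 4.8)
The proof follows the Ibragimov--Khasminskii program (see~\cite{IH81}), specialized to inhomogeneous Poisson observations as in~\cite{Kut98}. The plan is to study the normalized likelihood ratio process
\[
Z_n(u) = \frac{L\bigl(\vartheta_0 + u/\sqrt{n}, X^K\bigr)}{L\bigl(\vartheta_0, X^K\bigr)}, \qquad u \in U_n = \sqrt{n}\,(\Theta - \vartheta_0),
\]
and to verify the three hypotheses of the general convergence theorems of~\cite{IH81}. Once these hold uniformly over $\vartheta_0$ in a compact $\KK \subset \Theta$, the asserted consistency, the weak convergences $\sqrt{n}\,(\hat\vartheta_n-\vartheta_0) \Longrightarrow \zeta$ and $\sqrt{n}\,(\tilde\vartheta_n-\vartheta_0) \Longrightarrow \zeta$, and the convergence of all polynomial moments of the MLE and the BEs follow at once; asymptotic efficiency is then immediate by comparison with the H\'ajek--Le Cam bound~\eqref{ng}, already available through the LAN property.

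First I would establish the convergence of the finite-dimensional distributions of $Z_n(\cdot)$ to those of the limiting Gaussian likelihood ratio
\[
Z(u) = \exp\left\{\langle \Delta, u\rangle - \tfrac{1}{2}\, u^\top \mathrm{I}(\vartheta_0)\, u\right\}, \qquad \Delta \sim \mathcal{N}\bigl(0, \mathrm{I}(\vartheta_0)\bigr).
\]
Under $\mathscr{R}_1$--$\mathscr{R}_3$ this is precisely the LAN property quoted before the statement (Lemma~2.1 of~\cite{Kut98}), so this step needs only the expansion of $\ln\lambda_{k,n}$ built from the derivative formulas computed above, together with the central limit theorem for Poisson stochastic integrals.

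The analytic core consists of two moment bounds on $Z_n$, both derived from the exact Poisson Hellinger identity
\[
\Ex_{\vartheta_0}\, Z_n^{1/2}(u) = \exp\left\{-\frac{1}{2}\sum_{k=1}^{K}\int_{0}^{T}\left(\sqrt{\lambda_{k,n}(\vartheta_u,t)} - \sqrt{\lambda_{k,n}(\vartheta_0,t)}\right)^{2}\,\mathrm{d}t\right\}, \qquad \vartheta_u = \vartheta_0 + u/\sqrt{n},
\]
and its analogue for increments. The increment bound $\Ex_{\vartheta_0}\left|Z_n^{1/2}(u_1) - Z_n^{1/2}(u_2)\right|^2 \le C\,{\bil\|u_1 - u_2\bir\|}_4^2$, with $C$ uniform on $\KK$, follows by Taylor-expanding the square-root intensities: smoothness $\mathscr{R}_2$ and the fact that the intensities are bounded and bounded away from zero (since $\lambda_0>0$) control the remainder and furnish the constant. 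The tail bound $\Ex_{\vartheta_0} Z_n^{1/2}(u) \le \exp\bigl(-c\,{\bil\|u\bir\|}_4^2\bigr)$, uniform over $U_n$, is obtained by splitting according to whether $\vartheta_u$ is near or far from $\vartheta_0$: in the near region the squared Hellinger distance behaves like a positive multiple of $n^{-1}\, u^\top\mathrm{I}(\vartheta_0)\, u$ and $\mathscr{R}_3$ gives the quadratic lower bound; in the far region the identifiability condition $\mathscr{R}_4$ (in the form~\eqref{r4}) keeps the Hellinger distance bounded below by a positive constant, and since ${\bil\|u\bir\|}_4 \le \sqrt{n}\,\operatorname{diam}(\Theta)$ the prefactor $n$ then dominates and yields the exponential decay.

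I expect the tail bound to be the main obstacle. The difficulty is twofold. First, the front $\psi$ enters the intensities through the arrival times $\tau_k(\vartheta^{(i)})$, so the support of each signal \emph{moves} with the parameter; one must track the overlap of the shifted supports rather than integrate against a fixed weight. Second, the two sources interact, and the Hellinger distance carries cross contributions reflected in the off-diagonal Fisher blocks $\mathrm{I}_{13}, \mathrm{I}_{14}, \dots$, which must be shown not to destroy the lower bound when the two sources are close. In the present smooth regime, however, $s_{i,k}\in\mathcal{C}^2$ makes all remainders quadratically small, and the uniform infima in $\mathscr{R}_3$ and $\mathscr{R}_4$ supply the required constants; the structure of the argument then parallels the single-source treatment of~\cite{FKT20,CDFK22}, now carried out on $\mathcal{R}^4$ with the parameter split into the two source blocks.
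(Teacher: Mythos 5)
Your proposal is correct and follows essentially the same route as the paper: both reduce the theorem to the Ibragimov--Khasminskii/Kutoyants machinery (Theorems~2.4 and~2.5 of~\cite{Kut98}), with the only substantive verification being the uniform quadratic lower bound on the Hellinger distance, obtained exactly as you describe by splitting into a neighbourhood of $\vartheta_0$ (where $\mathscr{R}_3$ gives the bound via the Fisher information) and its complement (where $\mathscr{R}_4$ together with the boundedness of $\Theta$ gives it). The finite-dimensional convergence and the increment bound you list are handled in the paper by direct citation of the LAN property (Lemma~2.1 of~\cite{Kut98}), so your write-up is simply a slightly more explicit version of the same argument.
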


\begin{proof}
This theorem is a particular case of Theorems~2.4 and~2.5 of~\cite{Kut98} (see
as well~\cite{Kut82}).  Note that the model of observations with large
intensity asymptotics is equivalent to the model of $n \rightarrow \infty$
independent identically distributed inhomogeneous Poisson processes.  To
verify the condition \textbf{B4} of the~Theorem 2.4, for $\left\|\vartheta
-\vartheta _0\right\|_4\leq \varepsilon $ and sufficiently small $\varepsilon
>0$ we can write
\begin{equation}
\label{05}
\begin{aligned}
&\sum_{k=1}^{K}\int_{0}^{T} \Bigl[\sqrt{\lambda_{k}(\vartheta,t)} -
    \sqrt{\lambda_{k}(\vartheta_0,t)}\Bigr]^2\, \mathrm{d}t = \frac{1}{4}\,
  (\vartheta-\vartheta_0)^\top\, \mathrm{I}(\vartheta_0)\,
  (\vartheta-\vartheta_0)\, \bigl(1+O(\varepsilon)\bigr)\\*
&\qquad\qquad \geq \frac{1}{8}\, (\vartheta-\vartheta_0)^\top\,
  \mathrm{I}(\vartheta_0)\, (\vartheta-\vartheta_0) = \frac{1}{8}\,
         {\bil\|\vartheta-\vartheta_0\bir\|}_4^2\, e^\top\,
         \mathrm{I}(\vartheta_0)\, e \geq \kappa_1
                {\bil\|\vartheta-\vartheta_0\bir\|}_4^2.
\end{aligned}
\end{equation}
Here we used the condition $\mathscr{R}_3$.

For ${\bil\|\vartheta-\vartheta_0\bir\|}_4 \geq \varepsilon$, we have
\begin{align*}
&\sum_{k=1}^{K}\int_{0}^{T} \Bigl[\sqrt{\lambda_{k}(\vartheta,t)} -
    \sqrt{\lambda_{k}(\vartheta_0,t)}\Bigr]^2\, \mathrm{d}t =
  \sum_{k=1}^{K}\int_{0}^{T} \frac{\bigl[{\lambda_{k}(\vartheta,t)} -
      {\lambda_{k}(\vartheta_0,t)}\bigr]^2}{\Bigl[\sqrt{\lambda_{k}(\vartheta,t)}
      + \sqrt{\lambda_{k}(\vartheta_0,t)}\Bigr]^2}\, \mathrm{d}t\\*
&\qquad \quad \geq C\, \sum_{k=1}^{K}\int_{0}^{T}
  \bigl[\lambda_{k}(\vartheta,t) - \lambda_{k}(\vartheta_0,t)\bigr]^2\,
  \mathrm{d}t\\
&\qquad \quad \geq C\, \sum_{k=1}^{K}\int_{0}^{T} \bigl[s_{1,k}(t-\tau_{1,k})
    + s_{2,k}(t-\tau_{2,k}) - s_{1,k}(t-\tau_{1,k}^\circ) -
    s_{1,k}(t-\tau_{1,k}^\circ)\bigr]^2\, \mathrm{d}t\\*
&\qquad \quad \geq C g(\varepsilon).
\end{align*}
Here we used the boundedness of the functions $\lambda_{k}(\vartheta,t)$ and
denoted $g(\varepsilon)>0$ the left hand side of~\eqref{r4}.  Let us denote
$D(\Theta) = \sup_{\vartheta, \tilde\vartheta \in\Theta}
{\bil\|\vartheta-\tilde\vartheta\bir\|}_4$.  Then we have
\begin{equation}
\label{06}
\sum_{k=1}^{K}\int_{0}^{T} \Bigl[\sqrt{\lambda_{k}(\vartheta,t)} -
  \sqrt{\lambda_{k}(\vartheta_0,t)}\Bigr]^2\, \mathrm{d}t \geq C
g(\varepsilon) \geq C g(\varepsilon)\,
\frac{{\bil\|\vartheta-\vartheta_0\bir\|}_4^2}{D(\Theta)^2} \geq \kappa_2
  {\bil\|\vartheta-\vartheta_0\bir\|}_4^2.
\end{equation}
The estimates~\eqref{05} and~\eqref{06} can be joined in
\begin{equation}
\label{07}
\sum_{k=1}^{K}\int_{0}^{T} \Bigl[\sqrt{\lambda_{k}(\vartheta,t)} -
  \sqrt{\lambda_{k}(\vartheta_0,t)}\Bigr]^2\, \mathrm{d}t \geq \kappa
    {\bil\|\vartheta-\vartheta_0\bir\|}_4^2,
\end{equation}
where $\kappa = \kappa_1\wedge\kappa_2$.  Now~\textbf{B4} follows
from~\eqref{07}.
\end{proof}

\subsubsection{Cusp type fronts}

Let us now turn to the intensity function with cusp-type singularity.  Suppose
that the observed Poisson processes $X^K=(X_k(t),\ 0\leq t\leq
T,\ k=1,\ldots,K)$ have intensity functions
\begin{equation}
\label{cusp0}
\lambda_{k,n}(\vartheta,t) = n \psi_{\kappa ,\delta}(t-\tau_{1,k}) S_{1,k}(t)
+ n \psi_{\kappa ,\delta}(t-\tau_{2,k}) S_{2,k}(t) + n\lambda_0 =
n\lambda_k(\vartheta,t),
\end{equation}
where $\kappa \in \bigl(0,\frac{1}{2}\bigr)$ and
\begin{equation}
\label{cusp1}
\psi_{\kappa ,\delta}(t-\tau_{i,k}) =
\Bigl|\frac{t-\tau_{i,k}}{\delta}\Bigr|^\kappa \1_{\{0\leq t-\tau_{i,k}\leq
  \delta\}} + \1_{\{t-\tau_{i,k}>\delta\}},\quad\quad i=1,2,\quad
k=1,\ldots,K.
\end{equation}

Note that the intensity function of the Poisson process recorded by one
detector has two cusp-type singularities.  An example of such an intensity
function is given in Fig.~\ref{F2}.

\begin{figure}[!ht]
\centering
\includegraphics[width=0.7\textwidth]{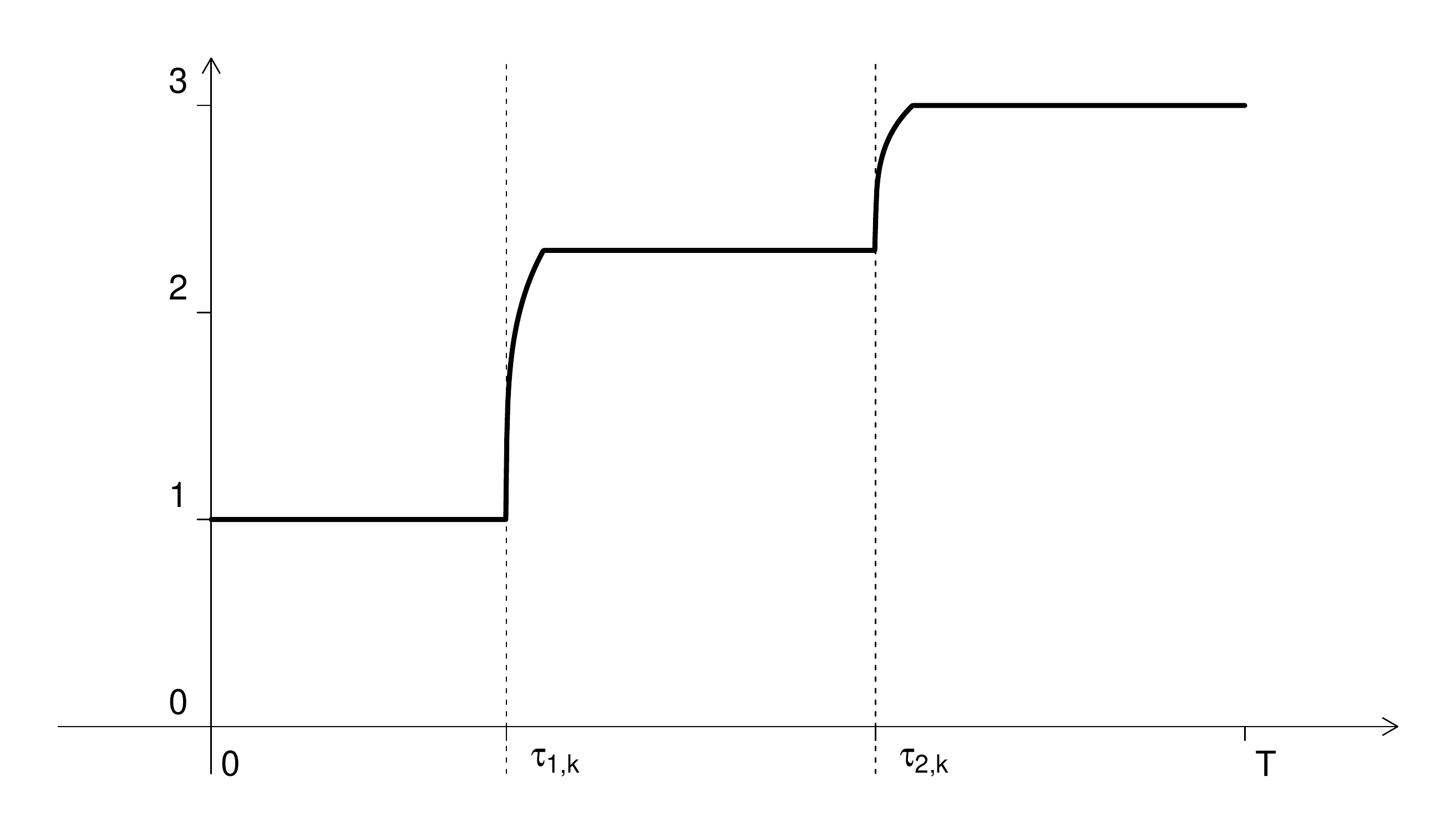}
\caption{Intensity with two cusp type singularities}
\label{F2}
\end{figure}

Recall that
\begin{align*}
&\nu \bigl[\tau_{k}(\vartheta_u^{(1)}) - \tau_{k}(\vartheta_0^{(1)})\bigr]\\*
&\qquad = \bigl[(x_k-x_1'^\circ-u_1\varphi_n)^2 +
    (y_k-y_1'^\circ-u_2\varphi_n)^2\bigr]^{1/2} - \bigl[(x_k-x_1'^\circ)^2 +
    (y_k-y_1'^\circ)^2\bigr]^{1/2}\\
&\qquad = \bigl(\rho_{1,k}-u_1\varphi_n\cos(\alpha_{1,k}^\circ) -
  u_2\varphi_n\sin(\alpha_{1,k}^\circ)\bigr) \bigl(1+O(\varphi_n)\bigr) -
  \rho_{1,k}\\*
 &\qquad = - \langle u^{(1)},m_{1,k}^\circ\rangle\, \varphi_n\,
  \bigl(1+O(\varphi_n)\bigr)
\end{align*}
and $\nu \bigl[\tau_{k}(\vartheta_u^{(2)}) - \tau_{k}(\vartheta_0^{(2)})\bigr]
= - \langle u^{(2)},m_{2,k}^\circ\rangle\, \varphi_n\, \bigl(1+O(\varphi_n)\bigr)$.

Introduce the notations
\begin{align}
&\BB_{i,k} = \bigl(u^{(i)}\,:\, \langle m_{i,k}^\circ,u^{(i)}\rangle < 0\bigr),\quad
\qquad \BB_{i,k}^c=\bigl(u^{(i)}:\; \langle m_{i,k}^\circ,u^{(i)}\rangle \geq
0\bigr),\quad i=1,2,\notag\\*
&I_{i,k}(u^{(i)}) = \hat\Gamma_{i,k}\int_{\mathcal{R}} \Bigl[\bigl|v +
  \nu^{-1} \langle u^{(i)},m_{i,k}^\circ\rangle\bigr|^{\kappa} \1_{\{v\geq
    -\nu^{-1} \langle u^{(i)},m_{i,k}^\circ\rangle\}} - v^{\kappa}\1_{\{v>0\}}
  \Bigr]\, \mathrm{d}W_{i,k}(v),\notag\\
&\Gamma_{1,k}^2 = \frac{S_{1,k}^2(\tau_{i,k}^\circ)}{\delta^{2\kappa}\,
  \lambda_k(\vartheta_0,\tau_{i,k}^\circ)\, \nu^{2\kappa+1}}\,,\qquad
\Gamma_{2,k}^2 = \frac{S_{2,k}^2(\tau_{i,k}^\circ)}{\delta^{2\kappa}\,
  \lambda_k(\vartheta_0,\tau_{i,k}^\circ)\, \nu^{2\kappa+1}}\,,\label{not1}\\
&\hat\Gamma_{1,k} = \frac{S_{1,k}(\tau_{i,k}^\circ)}{\delta^{\kappa}
  \sqrt{\lambda_k(\vartheta_0,\tau_{i,k}^\circ)}}\,, \qquad \qquad
\hat\Gamma_{2,k} =\frac{S_{2,k}(\tau_{i,k}^\circ)}{\delta^{\kappa}
  \sqrt{\lambda_k(\vartheta_0,\tau_{i,k}^\circ)}}\,,\notag\\
&Q_\kappa^2 = \int_{\mathcal{R}} \bigl[\bil|v-1\bir|^{\kappa} \1_{\{v\geq 1\}}
  - v^{\kappa} \1_{\{v>0\}}\bigl]^2\, \mathrm{d}v,\label{not2}\\
&Z_{(k)}(u) = \exp\Biggl(\sum_{i=1}^{2} \biggl[\hat\Gamma _{i,k}\,
  I_{i,k}(u^{(i)}) - \frac{\Gamma_{i,k}^2 Q_\kappa^2 }{2}\, \bigl|\langle
  u^{(i)},m_{i,k}^\circ\rangle\bigr|^{2\kappa+1}\biggl]\Biggr),\notag\\*
&Z(u) = \prod_{k=1}^K Z_{(k)}(u),\qquad \quad\qquad \quad \xi =
\frac{\int_{\mathcal{R}^4} u Z(u)\, \mathrm{d}u}{\int_{\mathcal{R}^4} Z(u)\,
  \mathrm{d}u}\,.\notag
\end{align}
Here $W_{i,k}(v)$, $v\in\mathcal{R}$, $i=1,2$ are two-sided Wiener processes,
i.e., $W_{i,k}(v)=W_{i,k}^+(v)$, $v\geq 0$, and $W_{i,k}(v)=W_{i,k}^-(-v)$,
$v\leq 0$, where $W_{i,k}^\pm(\cdot)$ are independent Wiener processes.

\noindent\textit{Conditions\/ $\mathscr{C}$.}
\begin{description}
\itshape
\item[$\mathscr{C}_1$.]The intensities of the observed processes are given
  by\/~\eqref{cusp0}--\eqref{cusp1}, where the functions\/
  $S_{i,k}(y)\in\mathcal{C}^1$ and are positive.
\item[$\mathscr{C}_2$.]The configuration of the detectors and the set\/
  $\Theta$ are such that all signals from the both sources arrive at the
  detectors during the period\/ $\left[0,T\right]$.
\item[$\mathscr{C}_3$.]The condition $\mathscr{R}_4$ is fulfilled.
\end{description}
We have the following lower bound on the risks of an arbitrary estimator
$\bar\vartheta_n$ of the positions of sources:
\[
\lim_{\varepsilon \rightarrow 0}\ \Liminf_{n \rightarrow
  \infty}\ \sup_{{\bil\|\vartheta - \vartheta_0\bir\|}_4 \leq \varepsilon}
n^{\frac{2}{2\kappa +1}}\, \Ex_\vartheta {\bil\|\bar\vartheta_n -
  \vartheta\bir\|}_4^2 \geq \Ex_{\vartheta_0}{\bil\|\xi\bir\|}_4^2.
\]

This bound is a particular case of a more general bound given in Theorem~1.9.1
of~\cite{IH81}.

\begin{theorem}
\label{T2}
Let the conditions\/ $\mathscr{C}$ be fulfilled.  Then, uniformly on
compacts\/ $\KK\subset\Theta$, the BEs $\tilde\vartheta_n$ are consistent,
converge in distribution:
\[
n^{\frac{1}{2\kappa+1}}\, \bigl(\tilde\vartheta_n - \vartheta_0\bigr)
\Longrightarrow \xi,
\]
the polynomial moments converge: for any\/ $p>0$, it holds
\[
\lim_{n \rightarrow \infty} n^{\frac{p}{2\kappa+1}}\, \Ex_\vartheta
    {\bil\|\tilde\vartheta_n - \vartheta_0\bir\|}_4^p = \Ex_{\vartheta_0}
    {\bil\|\xi\bir\|}_4^p,
\]
and the BEs are asymptotically efficient.
\end{theorem}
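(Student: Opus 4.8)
The plan is to follow the Ibragimov--Khasminskii programme, which reduces the assertions to a few properties of the normalized likelihood ratio. Put $\varphi_n = n^{-1/(2\kappa+1)}$ and, for $u$ with $\vartheta_u := \vartheta_0 + \varphi_n u \in \Theta$ (in the notation of the expansions recalled before the theorem), set
\[
Z_n(u) = \frac{L\bigl(\vartheta_u, X^K\bigr)}{L\bigl(\vartheta_0, X^K\bigr)}.
\]
By the general theorem of~\cite{IH81} (Theorem~1.10.2), in the large-intensity $n\to\infty$ i.i.d.\ form already invoked for Theorem~1, all the claims---consistency, the convergence $n^{1/(2\kappa+1)}(\tilde\vartheta_n-\vartheta_0)\Rightarrow\xi$, convergence of all polynomial moments, and, by comparison with the lower bound displayed before the theorem, asymptotic efficiency---follow once I verify: (i)~the finite-dimensional distributions of $Z_n(\cdot)$ converge to those of the process $Z(\cdot)$ of~\eqref{not1}--\eqref{not2} (after checking that $Z$ has a.s.\ continuous paths and is integrable, so that $\xi$ is well defined); (ii)~an increment estimate $\Ex_{\vartheta_0}|Z_n^{1/2}(u_1)-Z_n^{1/2}(u_2)|^2 \le C\,\|u_1-u_2\|^{2\kappa+1}$ on bounded sets, raised via the Poisson structure to a power large enough to dominate the dimension $4$ in the Kolmogorov criterion; and (iii)~a tail bound $\Ex_{\vartheta_0} Z_n^{1/2}(u) \le e^{-c\,\|u\|^{2\kappa+1}}$.

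For (i) I would write $\ln Z_n(u)$ as a sum over $k$ of a centered Poisson stochastic integral against $\mathrm{d}X_k - n\lambda_k(\vartheta_0,t)\,\mathrm{d}t$ plus a compensator term. Each $\lambda_k(\vartheta_u,\cdot)$ differs from $\lambda_k(\vartheta_0,\cdot)$ only near the $2K$ fronts $\tau_{i,k}^\circ$, where the perturbed arrival time is displaced by $-\nu^{-1}\langle m_{i,k}^\circ,u^{(i)}\rangle\varphi_n(1+o(1))$. The local change of variables $t=\tau_{i,k}^\circ+\varphi_n v$, together with the balance $n\varphi_n^{2\kappa+1}=1$, turns the cusp factor into $(\varphi_n/\delta)^\kappa|v+\nu^{-1}\langle m_{i,k}^\circ,u^{(i)}\rangle|^\kappa$ and freezes the amplitude $S_{i,k}(t)$ and the denominator $\lambda_k(\vartheta_0,t)$ at $\tau_{i,k}^\circ$. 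The Poisson central limit theorem then sends the centered integrals to the Wiener integrals $I_{i,k}(u^{(i)})$ against the two-sided $W_{i,k}$, while the Taylor expansion $\lambda_0\ln(1+x)-\lambda_0 x\approx-\tfrac12\lambda_0 x^2$ of the compensator produces, after the scaling $v=|\langle m_{i,k}^\circ,u^{(i)}\rangle|\,w$, the deterministic terms $-\tfrac12\Gamma_{i,k}^2 Q_\kappa^2\,|\langle m_{i,k}^\circ,u^{(i)}\rangle|^{2\kappa+1}$. Since the $2K$ fronts occur at distinct times, the local contributions are asymptotically independent, and their sum exponentiates to $Z(u)=\prod_k Z_{(k)}(u)$.

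For (ii) and (iii) I would pass through the Hellinger distance of the Poisson measures: both $\Ex_{\vartheta_0}|Z_n^{1/2}(u_1)-Z_n^{1/2}(u_2)|^2$ and $\Ex_{\vartheta_0}Z_n^{1/2}(u)$ are controlled by integrals of the form $\sum_k\int_0^T\bigl(\sqrt{\lambda_{k,n}(\vartheta_{u_1},t)}-\sqrt{\lambda_{k,n}(\vartheta_{u_2},t)}\bigr)^2\,\mathrm{d}t$. The same localization as in (i) shows this quantity is of order $\|u_1-u_2\|^{2\kappa+1}$ near the fronts, which gives the increment bound; a sufficiently high moment of $Z_n^{1/2}$, available because Poisson likelihood ratios have exponential moments, then yields the modulus-of-continuity control required in $\mathcal{R}^4$. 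For the tail (iii) I would split on $\|u\|$: for moderate $\|u\|$ the positivity of $\Gamma_{i,k}^2 Q_\kappa^2$ gives the local lower bound, whereas for large $\|u\|$ (i.e.\ $\vartheta_u$ far from $\vartheta_0$) I would reproduce the argument of~\eqref{05}--\eqref{07}, using the identifiability condition $\mathscr{R}_4$ (assumed through $\mathscr{C}_3$) to keep the Hellinger distance bounded away from zero and thus obtain exponential decay.

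The main obstacle is the increment estimate (ii) near the singularities: because the cusp makes $Z_n$ only Hölder of order $(2\kappa+1)/2$, rather than Lipschitz, in $u$, the Poisson stochastic integrals of the non-smooth differences $|v+c_1|^\kappa-|v+c_2|^\kappa$ must be estimated carefully and uniformly, and the resulting exponent $2\kappa+1<2$ has to be boosted by higher moments to beat the dimension $4$. A secondary subtlety, specific to two sources, is to keep the two fronts of each detector separated throughout the localization, which is precisely what makes the limit factor over $i=1,2$ inside each $Z_{(k)}$; this holds for the admissible configurations but must be tracked in the estimates. By contrast, I expect the finite-dimensional convergence (i) and the tail bound (iii) to be comparatively routine, given the proof of Theorem~1 and the single-source cusp analysis of the earlier works.
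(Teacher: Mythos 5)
Your proposal follows essentially the same route as the paper: the three Ibragimov--Khasminskii lemmas (weak convergence of finite-dimensional distributions of $Z_n$ via localization at the fronts and the Poisson CLT, the H\"older-$(2\kappa+1)$ increment bound on $Z_n^{1/2}$ through the Hellinger distance, and the exponential tail bound split according to whether $\vartheta_u$ is near $\vartheta_0$ or separated by identifiability), followed by Theorem~1.10.2 of~\cite{IH81}. The only superfluous step is your plan to boost the increment estimate to higher moments to beat the dimension~$4$ --- that is needed for the MLE (Theorem~1.10.1), but for the Bayesian estimators treated here the second-moment bound with exponent $2\kappa+1$ suffices, which is exactly why the paper states this theorem for the BEs only.
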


\begin{proof}
Let us study the normalized likelihood ratio
\[
Z_n(u) = Z_n^{(\vartheta_0)}(u) =
\frac{L\bigl(\vartheta_0+u\varphi_n,X^K\bigr)}{L\bigl(\vartheta_0,X^K\bigr)},\qquad
u\in\UU_n = \bigl(u\,:\, \vartheta_0+u\varphi_n\in\Theta\bigr),
\]
where $\varphi_n=n^{-\frac{1}{2\kappa+1}}$.  The properties of $Z_n(\cdot)$
which we need are described in the following three lemmas.

\begin{lemma}
\label{L1}
For any compact\/ $\KK\subset\Theta$, the finite-dimensional distributions
of\/ $Z_n(\cdot)$ converge, uniformly on\/ $\vartheta_0\in\Theta$, to the
finite-dimensional distributions of\/ $Z(\cdot)$.
\end{lemma}

\begin{proof}
Put $\vartheta_u = \vartheta_0 + u\varphi_n$, $\vartheta^{(1)}_u =
\vartheta_0^{(1)} + u^{(1)} \varphi_n$ and $\vartheta_u^{(2)} =
\vartheta_0^{(2)} + u^{(2)} \varphi_n$.  For a fixed $u$ denote $\gamma_n =
\sup_{0\leq t\leq T} \lambda _k(\vartheta_0,t)^{-1}
\bigl|\lambda_k(\vartheta_u,t) - \lambda_k(\vartheta_0,t)\bigr|$ and note that
$\gamma_n \rightarrow 0$ as $n \rightarrow \infty$.  Therefore, by Lemma~1.5
of~\cite{Kut22}, the likelihood ratio admits the representation
\begin{align*}
\ln Z_n(u) &= \sum_{k=1}^{K}\int_{0}^{T} \frac{\lambda_k(\vartheta_u,t) -
  \lambda_k(\vartheta_0,t)}{\lambda_k(\vartheta_0,t)}\, \bigl[\mathrm{d}X_k(t)
  - \lambda_k(\vartheta_0,t)\,\mathrm{d}t\bigr] \bigl(1+O(\gamma_n)\bigr)\\*
&\qquad -\frac{n}{2}\, \sum_{k=1}^{K}\int_{0}^{T}
\frac{\bigl[\lambda_k(\vartheta_u,t) -
    \lambda_k(\vartheta_0,t)\bigr]^2}{\lambda_k(\vartheta_0,t)}\, \mathrm{d}t
\bigl(1+O(\gamma_n)\bigr).
\end{align*}
Suppose that $\tau_{1,k}^\circ < \tau_{2,k}^\circ$ and set $2\tau
=\tau_{1,k}^\circ + \tau_{2,k}^\circ$.  Then the second integral can be
written as
\begin{align}
&\int_{0}^{T} \frac{\bigl[\lambda_k(\vartheta_u,t) -
      \lambda_k(\vartheta_0,t)\bigr]^2}{\lambda_k(\vartheta_0,t)}\,
  \mathrm{d}t\notag\\*
&\qquad = \int_{0}^{\tau} \frac{\bigl[\lambda_k(\vartheta_u,t) -
      \lambda_k(\vartheta_0,t)\bigr]^2}{\lambda_k(\vartheta_0,t)}\,
  \mathrm{d}t + \int_{\tau}^{T} \frac{\bigl[\lambda_k(\vartheta_u,t) -
      \lambda_k(\vartheta_0,t)\bigr]^2}{\lambda_k(\vartheta_0,t)}\,
  \mathrm{d}t\notag\\
&\qquad = \int_{0}^{\tau}
  \frac{\Bigl(\bigl[\psi_{\kappa,\delta}\bigl(t-\tau_{k}(\vartheta_u^{(1)})\bigl)
      - \psi_{\kappa,\delta}(t-\tau_{1,k}^\circ)\bigr] S_{1,k}(t) +
    A_k(\vartheta_u,\vartheta_0,t)\Bigr)^2}{\lambda_k(\vartheta_0,t)}\,
  \mathrm{d}t\notag\\*
&\qquad\qquad + \int_{\tau}^{T} \frac{\Bigl(B_k(\vartheta_u,\vartheta_0,t) +
    \bigl[\psi_{\kappa,\delta}\bigl(t-\tau_{k}(\vartheta_u^{(2)})\bigr) -
      \psi_{\kappa,\delta}(t-\tau_{2,k}^\circ)\bigr]
    S_{2,k}(t)\Bigr)^2}{\lambda_k(\vartheta_0,t)}\, \mathrm{d}t\notag\\*
&\qquad = J_{1,k,n}(u^{(1)}) + J_{2,k,n}(u^{(2)})
\label{eq0}
\end{align}
with obvious notations.  Note that for $t \in [0,\tau]$ the function
$A_k(\vartheta,\vartheta_0,t)$ have bounded derivatives w.r.t.\ $\vartheta$,
and that for $t \in [\tau,T]$ the same is true for the function
$B_k(\vartheta,\vartheta_0,t)$.

Suppose that $u^{(1)} \in \BB_{1,k}$.  Then for large $n$ we have
$\tau_{k}(\vartheta_u^{(1)}) > \tau_{1,k}^\circ$, and therefore
\begin{align}
&J_{1,k,n}(u^{(1)}) = n \int_{\tau_{1,k}^\circ}^{\tau}
  \frac{\Bigl(\bigl[\psi_{\kappa,\delta}\bigl(t-\tau_{k}(\vartheta_u^{(1)})\bigr)
      - \psi_{\kappa,\delta}(t-\tau_{1,k}^\circ)\bigr] S_{1,k}(t) +
    A_k(\vartheta_u,\vartheta_0)\Bigr)^2}{\lambda_k(\vartheta_0,t)}\,
  \mathrm{d}t\notag\\*
&\quad = n \int_{\tau_{1,k}^\circ}^{\tau}
  \frac{\Bigl(\bigl[\psi_{\kappa,\delta}\bigl(t-\tau_{k}(\vartheta_u^{(1)})\bigr)
      - \psi_{\kappa,\delta}(t-\tau_{1,k}^\circ)\bigr]
    S_{1,k}(t)\Bigr)^2}{\lambda_k(\vartheta_0,t)}\, \mathrm{d}t + o(1)\notag\\
&\quad = n \int_{0}^{\tau-\tau_{1,k}^\circ}
  \frac{\Bigl(\bigl[\psi_{\kappa,\delta}\bigl(s - \tau_{k}(\vartheta_u^{(1)})
      + \tau_{1,k}^\circ\bigr) - \psi_{\kappa,\delta}(s)\bigr] S_{1,k}(s +
    \tau_{1, k}^\circ)\Bigr)^2}{\lambda_k(\vartheta_0,s+\tau_{1,k}^\circ)}\,
  \mathrm{d}t + o(1)\notag\\
&\quad = n \int_{0}^{\tau-\tau_{1,k}^\circ}
  \frac{\Bigl(\bigl[\psi_{\kappa,\delta}\bigl(s + \nu^{-1}\, \langle
      u^{(1)},m_{1,k}^\circ\rangle\, \varphi_n\bigr) -
      \psi_{\kappa,\delta}(s)\bigr]
    S_{1,k}(s+\tau_{1,k}^\circ)\Bigr)^2}{\lambda_k(\vartheta_0,s+\tau_{1,k}^\circ)}\,
  \mathrm{d}t + o(1)\notag\\
&\quad = n\, \frac{S_{1,k}^2(\tau_{1,k}^\circ)}{\delta^{2\kappa}\,
    \lambda_k(\vartheta_0,\tau_{1,k}^\circ)}\,\biggl[\int_{0}^{-\nu^{-1}\,
      \langle u^{(1)},m_{1,k}^\circ\rangle\, \varphi_n} s^{2\kappa}\,
    \mathrm{d}t\notag\\*
&\qquad\qquad\qquad + \int_{-\nu^{-1}\, \langle u^{(1)},m_{1,k}^\circ\rangle\,
      \varphi_n}^{\tau-\tau_{1,k}^\circ} \Bigl[\bigl|s + \nu^{-1}\, \langle
      u^{(1)},m_{1,k}^\circ\rangle\, \varphi_n\bigr|^{\kappa} -
      s^{\kappa}\Bigr]^2\, \mathrm{d}t\biggr] + o(1)\notag\\
&\quad = n \varphi_n^{2\kappa+1}\, \Gamma_{1,k}^2\, \bigl|\langle
  u^{(1)},m_{1,k}^\circ\rangle\bigr|^{2\kappa+1}\, \biggl[ \int_{0}^{1}
    v^{2\kappa}\, \mathrm{d}v + \int_{1}^{c/\varphi_n}
    \bigl[\bil|v-1\bir|^{\kappa} - v^{\kappa}\bigr]^2\, \mathrm{d}v\biggr] +
  o(1)\notag\\
&\quad = \Gamma_{1,k}^2\, \bigl|\langle
  u^{(1)},m_{1,k}^\circ\rangle\bigr|^{2\kappa+1} \int_{0}^{\infty}
  \bigl[\bil|v-1\bir|^{\kappa}\, \1_{\{v\geq 1\}} - v^{\kappa}\bigr]^2\,
  \mathrm{d}v + o(1)\notag\\*
&\quad = \Gamma_{1,k}^2\, \bigl|\langle
  u^{(1)},m_{1,k}^\circ\rangle\bigr|^{2\kappa+1}\, Q_\kappa^2 + o(1),
\label{eq}
\end{align}
where we changed the variable $s = -\nu^{-1}\, \langle
u^{(1)},m_{1,k}^\circ\rangle\, v$ and used the relation $n
\varphi_n^{2\kappa+1}=1$ and the notations~\eqref{not1} and~\eqref{not2}.

For the values $u^{\left(2\right)}\in \BB_{2,k}$ we have a similar expression
\[
J_{2,k,n}(u^{(2)}) = \Gamma_{2,k}^2\, \bigl|\langle
u^{(2)},m_{2,k}^\circ\rangle\bigr|^{2\kappa+1}\, Q_\kappa^2 + o(1).
\]

If $u^{(1)} \in \BB_{1,k}^c$ and $u^{(2)} \in \BB_{2,k}^c$, then similar
calculations lead to the same integrals.

Hence 
\[
\int_{0}^{T} \frac{\bigl[\lambda_k(\vartheta_u,t) - \lambda
    _k(\vartheta_0,t)\bigr]^2}{\lambda_k(\vartheta_0,t)}\, \mathrm{d}t =
\Bigl[\Gamma_{1,k}^2\, \bigl|\langle
  u^{(1)},m_{1,k}^\circ\rangle\bigr|^{2\kappa+1} + \Gamma_{2,k}^2\,
  \bigl|\langle u^{(2)},m_{2,k}^\circ\rangle\bigr|^{2\kappa+1}\Bigr]\,
Q_\kappa^2 + o(1).
\]

Let us suppose that $\tau_k(\vartheta_0^{(1)}+u^{(1)}\varphi_n) >
\tau_{1,k}^\circ$.  Introduce the centered Poisson process
$\mathrm{d}\pi_{k,n}(t) = \mathrm{d}X_k(t) - \lambda_{k,n}(\vartheta_0,t)\,
\mathrm{d}t$ and consider the stochastic integral
\begin{align*}
&\int_{0}^{T} \frac{\bigl[\lambda_{k,n}(\vartheta_u,t) -
      \lambda_{k,n}(\vartheta_0,t)\bigr]}{\lambda_{k,n}(\vartheta_0,t)}\,
  \mathrm{d}\pi_{k,n}(t)\\*
&\qquad\qquad = \int_{\tau_{1,k}^\circ}^{\tau}
  \frac{\bigl[\lambda_{k}(\vartheta_u,t) -
      \lambda_{k}(\vartheta_0,t)\bigr]}{\lambda_k(\vartheta_0,t)}\,
  \mathrm{d}\pi _{k,n}(t) + \int_{\tau}^{T}
  \frac{\bigl[\lambda_{k}(\vartheta_u,t) -
      \lambda_{k}(\vartheta_0,t)\bigr]}{\lambda_k(\vartheta_0,t)}\,
  \mathrm{d}\pi _{k,n}(t)\\*
&\qquad\qquad = I_{1,k,n}(u^{(1)}) + I_{2,k,n}(u^{(2)}),
\end{align*}
where $2\tau = \tau_{k}(\vartheta_u^{(1)})+\tau_{1,k}^\circ$.

Using the same relations as above, for $u^{(1)}\in\BB_{1,k}$ we can write
\begin{align*}
&I_{1,k,n}(u^{(1)})\\*
&\quad = \int_{0}^{\tau-\tau_{1,k}^\circ}
  \frac{\bigl[\psi_{\kappa,\delta}\bigl(s + \nu^{-1}\, \langle
      u^{(1)},m_{1,k}^\circ\rangle\, \varphi_n\bigr) -
      \psi_{\kappa,\delta}(s)\bigr]
    S_{1,k}(s+\tau_{1,k}^\circ)}{\lambda_k(\vartheta_0,s+\tau_{1,k}^\circ)}\,
  \mathrm{d}\pi_{k,n}(t) + o(1)\\
&\quad = \frac{S_{1,k}(\tau_{1,k}^\circ)}{\delta^{\kappa}\,
    \lambda_k(\vartheta_0,\tau_{1,k}^\circ)}\, \biggl[\int_{0}^{-\nu^{-1}\,
      \langle u^{(1)},m_{1,k}^\circ\rangle\, \varphi_n} s^{\kappa}\,
    \mathrm{d}\pi_{k,n}(s+\tau_{1,k}^\circ)\\*
&\qquad\qquad + \int_{-\nu^{-1}\, \langle u^{(1)},m_{1,k}^\circ\rangle\,
      \varphi_n}^{\tau-\tau_{1,k}^\circ} \Bigl[\bigr|s + \nu^{-1}\, \langle
      u^{(1)},m_{1,k}^\circ\rangle\, \varphi_n\bigr|^{\kappa} -
      s^{\kappa}\Bigr]\, \mathrm{d}\pi_{k,n}(s+\tau_{1,k}^\circ)\biggr] +
  o(1)\\*
&\quad = \hat\Gamma_{1,k} \biggl[\int_{0}^{c/\varphi_n} \Bigl[\bigl|v +
      \nu^{-1}\, \langle u^{(1)},m_{1,k}^\circ\rangle\bigr|^{\kappa}\,
      \1_{\{v\geq -\nu^{-1}\, \langle u^{(1)},m_{1,k}^\circ\rangle \}} -
      v^{\kappa}\Bigr]\, \mathrm{d}W_{1,k,n}(v)\biggr] + o(1).
\end{align*}
Here we changed the variable $s=v\varphi _n$, used the relation
$\sqrt{n}\,\varphi_n^{\kappa +1/2}=1$ and denoted
\[
W_{1,k,n}(v) = \frac{\pi_{k,n}(v \varphi_n+\tau_{1,k}^\circ) -
  \pi_{k,n}(\tau_{1,k}^\circ)}{\sqrt{n \varphi_n \lambda_k
    (\vartheta_0,\tau_{1,k}^\circ)}}\,.
\]

This process has the following first two moments: $\Ex_{\vartheta_0}
W_{1,k,n}(v)=0$,
\begin{align*}
\Ex_{\vartheta_0} W_{1,k,n}(v)^2 & =\frac{1}{\varphi_n\,
  \lambda_k(\vartheta_0,\tau_{1,k}^\circ)}
\int_{\tau_{1,k}^\circ}^{\tau_{1,k}^\circ+v \varphi_n}
\lambda_k(\vartheta_0,t)\, \mathrm{d}t = v \bigl(1+o(1)\bigr) ,\\*
\Ex_{\vartheta_0} W_{1,k,n}(v_1) W_{1,k,n}(v_2) &= \frac{1}{\varphi_n\,
  \lambda_k(\vartheta_0,\tau_{1,k}^\circ)}
\int_{\tau_{1,k}^\circ}^{\tau_{1,k}^\circ+(v_1\wedge v_2)\varphi_n} \lambda
_k(\vartheta_0,t)\, \mathrm{d}t = (v_1\wedge v_2) \bigl(1+o(1)\bigr).
\end{align*}
By the central limit theorem (see, e.g., Theorem~1.2 of~\cite{Kut98})
\begin{align*}
I_{1,k,n}(u^{(1)}) &= \hat\Gamma_{1,k} \int_{0}^{c/\varphi_n}
\Bigl[\bigl|v+\nu^{-1}\, \langle u^{(1)},m_{1,k}^\circ\rangle\bigr|^{\kappa}\,
  \1_{\{v \geq -\nu^{-1}\, \langle u^{(1)},m_{1,k}^\circ\rangle\}} -
  v^{\kappa}\Bigr]\, \mathrm{d}W_{1,k,n}(v)\\*
&\Longrightarrow \hat\Gamma_{1,k} \int_{0}^{\infty} \Bigl[\bigl|v+\nu^{-1}\,
  \langle u^{(1)},m_{1,k}^\circ\rangle\bigr|^{\kappa}\, \1_{\{v\geq -\nu^{-1}
    \langle u^{(1)},m_{1,k}^\circ\rangle\}} - v^{\kappa}\Bigr]\,
\mathrm{d}W_{1,k}^+(v)\\*
&= \hat\Gamma_{1,k}\, I_{1,k}(u^{(1)}),
\end{align*}
where $W_{1,k}^+(v)$, $v \geq 0$, is a standard Wiener process.  A similar
limit for the integral $I_{2,k,n}(u^{(2)})$ can be obtained in the same way.

Suppose that $u^{(2)} \in \BB_{2,k}^c$, i.e., $\langle
u^{(2)},m_{2,k}^\circ\rangle \geq 0$.  Then
\begin{align*}
I_{2,k,n}(u^{(2)}) &= \hat\Gamma_{2,k} \int_{-\nu^{-1}\, \langle
  u^{(2)},m_{2,k}^\circ\rangle}^{c/\varphi_n} \Bigl[\bigl|v+\nu^{-1}\, \langle
  u^{(2)},m_{2,k}^\circ\rangle\bigr|^{\kappa} - v^{\kappa}\, \1_{\{v\geq 0\}}
  \Bigr]\, \mathrm{d}W_{2,k,n}(v)\\*
&\Longrightarrow \hat\Gamma_{2,k} \int_{-\nu^{-1}\, \langle
  u^{(2)},m_{2,k}^\circ\rangle}^{\infty} \Bigl[\bigl|v+\nu^{-1}\, \langle
  u^{(2)},m_{2,k}^\circ\rangle\bigr|^{\kappa} - v^{\kappa}\, \1_{\{v\geq 0\}}
  \Bigr]\, \mathrm{d}W_{2,k}^+(v)\\*
&= \hat\Gamma_{2,k}\, I_{2,k}(u^{(2)}),
\end{align*}

Therefore we obtain
\begin{align*}
\ln Z_{(k),n}(u) &= \int_{0}^{T} \frac{\lambda_k(\vartheta_u,t) -
  \lambda_k(\vartheta_0,t)}{\lambda_k(\vartheta_0,t)}\, \bigl[\mathrm{d}X_k(t)
  - \lambda_k(\vartheta_0,t)\, \mathrm{d}t\bigr]\\*
&\qquad\qquad\qquad\qquad -\frac{n}{2}\, \int_{0}^{T}
\frac{\bigl[\lambda_k(\vartheta_u,t) -
    \lambda_k(\vartheta_0,t)\bigr]^2}{\lambda_k(\vartheta_0,t)}\, \mathrm{d}t
+ o(1)\\
&\Longrightarrow \hat\Gamma_{1,k}\, I_{1,k}(u^{(1)}) - \frac{\Gamma_{1,k}^2
  Q_\kappa^2}{2}\, \bigl|\langle
u^{(1)},m_{1,k}^\circ\rangle\bigr|^{2\kappa+1}\\*
&\qquad\qquad\qquad\qquad + \hat\Gamma_{2,k}\, I_{2,k}(u^{(2)}) -
\frac{\Gamma_{2,k}^2 Q_\kappa^2 }{2}\, \bigl|\langle
u^{(2)},m_{2,k}^\circ\rangle\bigr|^{2\kappa+1}\\*
&=\ln Z_{(k)}(u).
\end{align*}
The Wiener processes $W_{i,k}(\cdot)$, $i=1,2$, $k=1,\ldots,K$, are
independent and this convergence provides the convergence of one-dimensional
distributions
\[
Z_n(u) \Longrightarrow Z(u) = \prod_{k=1}^K Z_{(k)}(u),\qquad \quad u \in
\mathcal{R}^4.
\]

The convergence of finite-dimensional distributions can be proved following
the same lines, but we omit it since it is too cumbersome and does not use new
ideas or tools.
\end{proof}

\begin{lemma}
There exists a constant\/ $C>0$ such that for any\/ $L>0$, it holds
\[
\sup_{\bil|u\bir|<L} \Ex_{\vartheta_0} \bigl|Z_n(u)^{1/2} -
Z_n(u')^{1/2}\bigr|^2 \leq C\, (1+L^{1-2\kappa})\,
{\bil\|u-u'\bir\|}_4^{1+2\kappa}.
\]
\end{lemma}

\begin{proof}
By Lemma~1.5 of~\cite{Kut98}, we have
\begin{equation}
\label{l2}
\begin{aligned}
\Ex_{\vartheta_0} \bigl|Z_n(u)^{1/2} - Z_n(u')^{1/2}\bigr|^2 &\leq
\sum_{k=1}^{K} \int_{0}^{T} \biggl[\sqrt{\lambda_{k,n}(\vartheta_{u},t)} -
  \sqrt{\lambda_{k,n}(\vartheta_{u'},t)}\biggr]^2\, \mathrm{d}t\\*
& \leq C n \sum_{k=1}^{K} \int_{0}^{T} \bigl[\lambda_k(\vartheta_{u},t) -
  \lambda_k(\vartheta_{u'},t)\bigr]^2\, \mathrm{d}t.
\end{aligned}
\end{equation}

Note that the parts of integrals in~\eqref{eq0} containing differentiable
functions $A_k(\vartheta_u,\vartheta_0,t)$ and
$B_k(\vartheta_u,\vartheta_0,t)$ have estimates like
\[
n \int_{0}^{\tau} \bigl[A_k(\vartheta_u,\vartheta_0,t) -
  A_k(\vartheta_{u'},\vartheta_0,t)\bigr]^2\, \mathrm{d}t \leq C\, n
\varphi_n^2\, {\bil\|u-u'\bir\|}^2 = C\, n^{-1+2\kappa}\,
       {\bil\|u-u'\bir\|}_4^2.
\]
Therefore, following~\eqref{eq}, we obtain the relations
\begin{align*}
&\sum_{k=1}^{K} \int_{0}^{T} \bigl[\lambda_k(\vartheta_{u},t) -
    \lambda_k(\vartheta_{u'},t)\bigr]^2\, \mathrm{d}t\\*
&\qquad \qquad \leq C\, \Bigl(\bil{\|u^{(1)}-u'^{(1)}\bir\|}_2^{1+2\kappa} +
  {\bil\|u^{(2)}-u'^{(2)}\bir\|}_2^{1+2\kappa}\Bigr) + C\, n^{-1+2\kappa}\,
  {\bil\|u-u'\bir\|}_4^2\\
&\qquad \qquad \leq C\, {\bil\|u-u'\bir\|}_4^{1+2\kappa} + C\,
  n^{-1+2\kappa}\, {\bil\|u-u'\bir\|}_4^2\\*
&\qquad \qquad \leq C\, (1+L^{1-2\kappa})\, {\bil\|u-u'\bir\|}_4^{1+2\kappa}.
\end{align*}
\end{proof}

\begin{lemma}
\label{L3} 
There exist\/ $c_*>0$ such that
\[
\Pb_{\vartheta_0} \Bigl(Z_T(u) > e^{-c_* {\bil\|u\bir\|}_4^{2\kappa+1}}\Bigr)
\leq e^{-c_* {\bil\|u\bir\|}_4^{2\kappa +1}}
\]
\end{lemma}

\begin{proof} 
By the same Lemma~1.5 of~\cite{Kut98}, we have
\begin{align*}
&\Pb_{\vartheta_0} \Bigl(Z_T(u) > e^{-c_* {\bil\|u\bir\|}_4^{2\kappa+1}}\Bigr)
  \leq e^{\frac{c_*}{2} {\bil\|u\bir\|}_4^{2\kappa +1}}\, \Ex_{\vartheta_0}
  Z_n(u)^{1/2}\\*
&\qquad \qquad = \exp\Biggl(\frac{c_*}{2}\, {\bil\|u\bir\|}_4^{2\kappa+1} -
  \frac{1}{2}\, \sum_{k=1}^{K}\int_{0}^{T}
  \biggl(\sqrt{\lambda_{k,n}(\vartheta_u,t)} -
  \sqrt{\lambda_{k,n}(\vartheta_u,t)}\biggr)^2\,\mathrm{d}t\Biggr).
\end{align*}
Using calculations similar to those of~\eqref{eq0} and~\eqref{eq}, we obtain
for ${\bil\|\vartheta_u-\vartheta_0\bir\|}_4 \leq \varepsilon$ and
sufficiently small $\varepsilon>0$ the estimate
\[
\sum_{k=1}^{K}\int_{0}^{T} \biggl(\sqrt{\lambda_{k,n}(\vartheta_u,t)} -
\sqrt{\lambda_{k,n}(\vartheta_u,t)}\biggr)^2\,\mathrm{d}t \geq c_1\,
     {\bil\|u\bir\|}_4^{2\kappa +1}.
\]

Consider now the case ${\bil\|\vartheta_u-\vartheta_0\bir\|}_4 > \varepsilon$.
Let us denote
\[
g(\varepsilon) = \inf_{{\bil\|\vartheta -
    \vartheta_0\bir\|}_4>\varepsilon}\ \sum_{k=1}^{K}\int_{0}^{T}
\bigl[S_{1,k}(\vartheta^{(1)},t) + S_{2,k}(\vartheta^{(2)},t) -
  S_{1,k}(\vartheta_0^{(1)},t) - S_{2,k}(\vartheta_0^{(2)},t)\bigr]^2\,
\mathrm{d}t
\]
and remark that $\varphi_n\,{\bil\|u\bir\|}_4\leq D =
\smash{\sup\limits_{\vartheta,\vartheta'\in\Theta}}
      {\bil\|\vartheta-\vartheta'\bir\|}_4$.  Hence $n > D^{-(2\kappa+1)}
      {\bil\|u\bir\|}_4^{2\kappa+1}$.  As $g(\varepsilon)>0$ we can write
\begin{align*}
&\sum_{k=1}^{K}\int_{0}^{T} \biggl(\sqrt{\lambda_{k,n}(\vartheta_u,t)} -
\sqrt{\lambda_{k,n}(\vartheta_u,t)}\biggr)^2\,\mathrm{d}t\\*
&\qquad \qquad\qquad \qquad \geq c \sum_{k=1}^{K}\int_{0}^{T}
\bigl[\lambda_{k,n}(\vartheta_u,t) -
  \lambda_{k,n}(\vartheta_0,t)\bigr]^2\,\mathrm{d}t\\*
&\qquad \qquad\qquad \qquad \geq c\,n\,g(\varepsilon) \geq c\,g(\varepsilon)
D^{-(2\kappa+1)}\, {\bil\|u\bir\|}_4^{2\kappa+1} = c_2\,
{\bil\|u\bir\|}_4^{2\kappa+1}.
\end{align*}

Finally, denoting $\bar c = c_1\wedge c_2$ and setting $c_*=\bar c/3$, we
obtain
\[
\frac{c_*}{2}\, {\bil\|u\bir\|}_4^{2\kappa+1} - \frac{1}{2}\,
\sum_{k=1}^{K}\int_{0}^{T} \biggl(\sqrt{\lambda_{k,n}(\vartheta_u,t)} -
\sqrt{\lambda_{k,n}(\vartheta_u,t)}\biggr)^2\,\mathrm{d}t \leq -c_*\,
     {\bil\|u\bir\|}_4^{2\kappa +1}.
\]
\end{proof}

The properties of the process $Z_n(\cdot)$ established in
Lemmas~\ref{L1}-\ref{L3} allow us to cite Theorem~1.10.2 of~\cite{IH81} and,
therefore, to obtain the properties of BEs stated in Theorem~\ref{T2}.
\end{proof}

\subsubsection{Change point type fronts}

Suppose that the intensity functions of the observed inhomogeneous Poisson
processes have jumps at the moments of arrival of the signals, i.e., the
intensities are
\begin{equation}
\label{cp}
\lambda_{k,n}(\vartheta,t) = n \1_{\{t>\tau_{k}(\vartheta^{(1)})\}} S_{1,k}(t)
+ n \1_{\{t>\tau_{k}(\vartheta^{(2)})\}} S_{2,k}(t) + n\lambda_0 = n
\lambda_k(\vartheta,t),
\end{equation}
where $0\leq t\leq T$, $k=1,\ldots,K$.  As before, $\vartheta =
(\vartheta^{(1)},\vartheta^{(2)})^\top \in \Theta$, where $\vartheta^{(1)} =
(\vartheta_1,\vartheta_2)^\top = (x_1',y_1')^\top $ (position of the
source~$\SS_1$) and $\vartheta^{(2)} = (\vartheta_3,\vartheta_4)^\top =
(x_2',y_2')^\top$ (position of the source~$\SS_2$), and
\[
\tau_k(\vartheta^{(i)}) = \nu^{-1} \bigl((x_k-x_i')^2 +
(y_k-y_i')^2\bigr)^{1/2} = \tau_{i,k}.
\]

Introduce the notations
\begin{align*}
\ell_{i,k} &
=\ln\biggl(\frac{\lambda_0}{S_{i,k}(\tau_{i,k}^\circ)+\lambda_0}\biggr),\quad
\BB_{i,k} = \bigl(u^{(i)}\,:\, \langle
u^{(i)},m_{i,k}^\circ\rangle<0\bigr),\ i=1,2,\ k=1,\ldots,K,\\*
Z_{(k)}(u) &= \exp\Biggl(\sum_{i=1}^{2} \Bigl[\ell_{i,k}\,
  x_{i,k}^+\bigl(-\nu^{-1}\, \langle u^{(i)},m_{i,k}^\circ\rangle\bigr) -
  \langle u^{(i)},m_{i,k}^\circ\rangle\, \nu ^{-1}\,
  S_{i,k}(\tau_{i,k}^\circ)\Bigr] \1_{\BB_{i,k}}\\*
&\qquad \quad\qquad \quad -\sum_{i=1}^{2} \Bigl[\ell_{i,k}\,
  x_{i,k}^-\bigl(-\nu^{-1}\, \langle u^{(i)},m_{i,k}^\circ\rangle\bigr) -
  \langle u^{(i)},m_{i,k}^\circ\rangle\, \nu ^{-1}\,
  S_{i,k}(\tau_{i,k}^\circ)\Bigr] \1_{\BB_{i,k}^c}\Biggr),\\*
Z(u) &= \prod_{k=1}^K Z_{(k)}(u),\qquad\quad\qquad\qquad\qquad\quad \eta =
\frac{\int_{\mathcal{R}^4} u\, Z(u)\, \mathrm{d}u}{\int_{\mathcal{R}^4} Z(u)\,
  \mathrm{d}u}\,.
\end{align*}
Here
\[
x_{i,k}^+\bigl(-\nu^{-1}\,\langle u^{(i)},m_{i,k}^\circ\rangle\bigr) =
y_{i,k}^+(s)_{s=-\nu^{-1}\, \langle u^{(i)},m_{i,k}^\circ\rangle},\qquad s\geq
0,
\]
where $y_{i,k}^+(s)$, $s\geq 0$, is a Poisson process with unit intensity.
Similarly
\[
x_{i,k}^-\bigl(-\nu^{-1}\,\langle u^{(i)},m_{i,k}^\circ\rangle\bigr) =
y_{i,k}^-(-s)_{s=\nu ^{-1}\, \langle u^{(i)},m_{i,k}^\circ\rangle},\qquad
s\leq 0,
\]
with another Poisson process $y_{i,k}^-(s)$, $s\geq 0$ with unit intensity.
All Poisson processes $y_{i,k}^\pm(s)$, $s\geq 0$, $i=1,2$, $k=1,\ldots,K$ are
independent.

\noindent\textit{Conditions\/ $\mathscr{D}$.}
\begin{description}
\itshape
\item[$\mathscr{D}_1$.] The intensities of the observed processes
  are~\eqref{cp}, where the functions\/ $S_{i,k}(y) \in \mathcal{C}^1$ and are
  positive.  The set\/ $\Theta \subset \mathcal{R}^4$ is open, bounded and
  convex.
\item[$\mathscr{D}_2$.] The configuration of the detectors and the set\/
  $\Theta$ are such that the signals from the both sources arrive at the
  detectors during the period $[0,T]$.
\item[$\mathscr{D}_3$.] The condition $\mathscr{R}_4$ is fulfilled.
\end{description}

The following lower bound 
\[
\lim_{\varepsilon \rightarrow 0}\ \Liminf_{n \rightarrow
  \infty}\ \sup_{\bil\|\vartheta - \vartheta_0\bir\| \leq \varepsilon} n^2\,
\Ex_{\vartheta}{\bil\|\bar\vartheta_n-\vartheta\bir\|}_4^2 \geq
\Ex_{\vartheta_0}{\bil\|\eta\bir\|}_4^2
\]
holds.  This bound is another particular case of Theorem~1.9.1 of~\cite{IH81}.

\begin{theorem}
\label{T3}
Let the conditions\/ ${\scr D}$ be fulfilled.  Then, uniformly on compacts\/
$\KK\subset\Theta$, the BEs $\tilde\vartheta_n$ are consistent, converge in
distribution:
\[
n\, \bigl(\tilde\vartheta_n-\vartheta_0\bigr) \Longrightarrow \eta,
\]
the polynomial moments converge: for any\/ $p>0$, it holds
\[
n^p\, \Ex_{\vartheta _0}{\bil\|\tilde\vartheta_n-\vartheta_0\bir\|}^p_4
\longrightarrow \Ex_{\vartheta_0}{\bil\|\eta\bir\|}^p_4,
\]
and the BEs are asymptotically efficient.
\end{theorem}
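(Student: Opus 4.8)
The plan is to follow the Ibragimov--Khasminskii scheme used for Theorem~\ref{T2}, now with the normalization $\varphi_n=n^{-1}$. I would consider the normalized likelihood ratio
\[
Z_n(u)=\frac{L\bigl(\vartheta_0+u\varphi_n,X^K\bigr)}{L\bigl(\vartheta_0,X^K\bigr)},\qquad u\in\UU_n=\bigl(u\,:\,\vartheta_0+u\varphi_n\in\Theta\bigr),
\]
and verify the hypotheses of Theorem~1.10.2 of~\cite{IH81} through the three analogues of Lemmas~\ref{L1}--\ref{L3}: convergence of the finite-dimensional distributions of $Z_n(\cdot)$ to those of $Z(\cdot)$, a bound on the increments of $Z_n^{1/2}(\cdot)$, and an exponential estimate of the tails of $Z_n(\cdot)$.

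First I would establish the finite-dimensional convergence, which is the decisive and genuinely new step. Here the linearized representation of $\ln Z_n(u)$ used in Lemma~\ref{L1} (via Lemma~1.5 of~\cite{Kut22}) is \emph{not} available, since for discontinuous intensities the quantity $\gamma_n=\sup_t\lambda_k(\vartheta_0,t)^{-1}|\lambda_k(\vartheta_u,t)-\lambda_k(\vartheta_0,t)|$ does not tend to zero: moving a change-point creates an interval on which $\lambda_k(\vartheta_u,t)-\lambda_k(\vartheta_0,t)$ is a full jump. One must therefore work directly with the explicit log-likelihood ratio
\[
\ln Z_n(u)=\sum_{k=1}^K\int_0^T\ln\frac{\lambda_{k,n}(\vartheta_u,t)}{\lambda_{k,n}(\vartheta_0,t)}\,\mathrm{d}X_k(t)-\sum_{k=1}^K\int_0^T\bigl[\lambda_{k,n}(\vartheta_u,t)-\lambda_{k,n}(\vartheta_0,t)\bigr]\,\mathrm{d}t.
\]
Since $\tau_{1,k}^\circ\neq\tau_{2,k}^\circ$, for large $n$ the two change-points of the $k$-th intensity are separated, so the integrals localize in two shrinking neighbourhoods and the contributions of the two sources decouple, exactly as in the splitting~\eqref{eq0}. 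For a fixed source $i$ the change-point is displaced by $\tau_k(\vartheta_u^{(i)})-\tau_{i,k}^\circ=-\nu^{-1}\langle u^{(i)},m_{i,k}^\circ\rangle\,\varphi_n\bigl(1+o(1)\bigr)$, so the relevant interval has length of order $\varphi_n$. On this interval the integrand of the first sum equals the logarithm of the ratio of the two intensity levels across the change-point, namely $\ell_{i,k}$, while $\mathrm{d}X_k$ counts the jumps of the observed process; using $n\varphi_n=1$, an interval of length $\nu^{-1}|\langle u^{(i)},m_{i,k}^\circ\rangle|\,\varphi_n$ carrying an intensity of order $n$ contains, in the limit, a Poisson number of jumps of unit rate. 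This produces the terms $\ell_{i,k}\,x_{i,k}^+\bigl(-\nu^{-1}\langle u^{(i)},m_{i,k}^\circ\rangle\bigr)$ when $u^{(i)}\in\BB_{i,k}$ (the change-point moves to the right) and $\ell_{i,k}\,x_{i,k}^-\bigl(-\nu^{-1}\langle u^{(i)},m_{i,k}^\circ\rangle\bigr)$ when $u^{(i)}\in\BB_{i,k}^c$, while the compensator of the second sum yields the linear drift $-\langle u^{(i)},m_{i,k}^\circ\rangle\,\nu^{-1}S_{i,k}(\tau_{i,k}^\circ)$. Collecting the two branches gives $\ln Z_{(k),n}(u)\Longrightarrow\ln Z_{(k)}(u)$, and the independence of the Poisson processes $y_{i,k}^\pm(\cdot)$ over $i$ and $k$ yields $Z_n(u)\Longrightarrow Z(u)=\prod_{k=1}^K Z_{(k)}(u)$; the joint convergence of the finite-dimensional distributions follows in the same manner.

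Next I would verify the two remaining estimates, which are the $\kappa=0$ specializations of the last two lemmas and rest, as there, on the Hellinger-type bounds of Lemma~1.5 of~\cite{Kut98}, valid without any smallness of $\gamma_n$. Since the square of the difference of two shifted indicators, integrated over an interval of length of order $\varphi_n\,{\bil\|u-u'\bir\|}_4$ against an intensity of order $n$, is of order ${\bil\|u-u'\bir\|}_4$, one obtains
\[
\Ex_{\vartheta_0}\bigl|Z_n(u)^{1/2}-Z_n(u')^{1/2}\bigr|^2\leq C\,(1+L)\,{\bil\|u-u'\bir\|}_4,\qquad {\bil|u\bir|}<L.
\]
The same computation gives, for ${\bil\|\vartheta_u-\vartheta_0\bir\|}_4\leq\varepsilon$, a lower bound of order ${\bil\|u\bir\|}_4$ for the Hellinger integral $\sum_k\int_0^T\bigl[\sqrt{\lambda_{k,n}(\vartheta_u,t)}-\sqrt{\lambda_{k,n}(\vartheta_0,t)}\bigr]^2\,\mathrm{d}t$, while for ${\bil\|\vartheta_u-\vartheta_0\bir\|}_4>\varepsilon$ the identifiability condition $\mathscr{R}_4$ (assumed through $\mathscr{D}_3$) together with the bound $n>D^{-1}{\bil\|u\bir\|}_4$, where $D=\sup_{\vartheta,\vartheta'\in\Theta}{\bil\|\vartheta-\vartheta'\bir\|}_4$, provides the same linear lower bound; hence there is $c_*>0$ with
\[
\Pb_{\vartheta_0}\Bigl(Z_n(u)>e^{-c_*{\bil\|u\bir\|}_4}\Bigr)\leq e^{-c_*{\bil\|u\bir\|}_4}.
\]

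The point where the change-point case departs from the smooth and cusp cases is that the increment estimate only yields the exponent $1$ in ${\bil\|u-u'\bir\|}_4$, below the dimension $4$; consequently $Z_n(\cdot)$ is not tight in the space of continuous functions, its limit $Z(\cdot)$ having jumps governed by the Poisson processes $y_{i,k}^\pm(\cdot)$. This is precisely why the statement is confined to the Bayesian estimators: after normalization $n(\tilde\vartheta_n-\vartheta_0)$ is the integral functional $\int u\,Z_n(u)\,\mathrm{d}u/\int Z_n(u)\,\mathrm{d}u$ (the continuous prior cancelling in the limit), for which the three properties above suffice---the tail estimate renders the tails of the integrals uniformly negligible, the linear increment bound controls their oscillation, and the finite-dimensional convergence identifies the limit $\eta$. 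This is the content of Theorem~1.10.2 of~\cite{IH81}, which applies with rate $n$ and limit $\eta$ and delivers the consistency, the convergence $n(\tilde\vartheta_n-\vartheta_0)\Longrightarrow\eta$, the convergence of all polynomial moments, and---since the limiting risk $\Ex_{\vartheta_0}{\bil\|\eta\bir\|}_4^2$ coincides with the lower bound stated before the theorem---the asymptotic efficiency. The hard part will be this first step: justifying the Poisson limit of the jump counts on the shrinking intervals and checking that the two change-points of each detector do not interfere, which is where essentially all the work lies.
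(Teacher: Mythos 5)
Your proposal is correct and follows essentially the same route as the paper: the same normalization $\varphi_n=n^{-1}$, the same three lemmas on the normalized likelihood ratio (finite-dimensional convergence obtained by localizing the log-likelihood near the two change points and identifying the Poisson jump counts and the linear drift, the Hellinger-type increment bound of order ${\bil\|u-u'\bir\|}_4$, and the exponential tail estimate split into the near and far zones via identifiability), all feeding into Theorem~1.10.2 of~\cite{IH81}. Your observation that the linearized representation of the log-likelihood is unavailable for discontinuous intensities, forcing the direct computation, matches what the paper does implicitly.
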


\begin{proof}
The normalized likelihood ratio function in this problem is given by
\begin{align*}
\ln Z_n(u) &= \ln Z_n^{(\vartheta_0)}(u)\\*
&= \sum_{k=1}^{K}\int_{0}^{T} \ln\frac{\lambda_k(\vartheta_0 +
  n^{-1}u,t)}{\lambda_k(\vartheta_0,t)}\, \mathrm{d}X_k(t) -
n\sum_{k=1}^{K}\int_{0}^{T} \bigl[\lambda_k(\vartheta_0+n^{-1}u,t) -
  \lambda_k(\vartheta_0,t)\bigr]\, \mathrm{d}t.
\end{align*}

We have to check once more the conditions of Theorem~1.10.1 of~\cite{IH81} and
to prove three lemmas.

\begin{lemma}
\label{L4}
For any compact\/ $\KK\subset\Theta$, the finite-dimensional distributions
of\/ $Z_n(\cdot)$ converge, uniformly on\/ $\vartheta_0\in\Theta$, to the
finite-dimensional distributions of\/ $Z(\cdot)$.
\end{lemma}

\begin{proof}
Let us set $\vartheta_u = \vartheta_0 + n^{-1} u$ and suppose that $u^{(1)}
\in \BB_{1,k}$ and $u^{(2)} \in \BB_{2,k}$.  For $n$ sufficiently large we
have $\tau_{k}(\vartheta_u^{(1)}) > \tau_{1,k}^\circ$ and
$\tau_{k}(\vartheta_u^{(2)}) > \tau_{2,k}^\circ >
\tau_{k}(\vartheta_u^{(1)})$, and so
\[
\int_{0}^{T} \ln\frac{\lambda_k(\vartheta_u,t)}{\lambda_k(\vartheta_0,t)}\,
  \mathrm{d}X_k(t) = \int_{\tau_{1,k}^\circ}^{\tau_{k}(\vartheta_u^{(1)})}
  \ln\frac{\lambda_0}{S_{1,k}(t)+\lambda_0}\, \mathrm{d}X_k(t) +
  \int_{\tau_{2,k}^\circ}^{\tau_{k}(\vartheta_u^{(2)})}
  \ln\frac{\lambda_0}{S_{2,k}(t)+\lambda_0}\, \mathrm{d}X_k(t).
\]
and
\[
\int_{0}^{T} \bigl[\lambda_k(\vartheta_u,t) -
  \lambda_k(\vartheta_0,t)\bigr]\, \mathrm{d}t =
-\int_{\tau_{1,k}(\vartheta_0)}^{\tau_{1,k}(\vartheta_u)} S_{1,k}(t)\,
\mathrm{d}t - \int_{\tau_{2,k}(\vartheta_0)}^{\tau_{2,k}(\vartheta_u)}
S_{2,k}(t)\, \mathrm{d}t.
\]
For a fixed $u$, using Taylor expansion we can write (recall that
$\tau_{i,k}^\circ = \tau_{k}(\vartheta_0^{(i)})$, $i=1,2$)
\begin{align*}
\int_{\tau_{1,k}^\circ}^{\tau_{k}(\vartheta_u^{(1)})} S_{1,k}(t)\, \mathrm{d}t
&= \bigl(\tau_{k}(\vartheta_u^{(1)}) - \tau_{1,k}^\circ\bigr)\,
S_{1,k}(\tau_{1,k}^\circ) + \frac{1}{n}\,
\int_{\tau_{1,k}^\circ}^{\tau_{k}(\vartheta_u^{(1)})} S_{1,k}'(\tilde t)\,
\mathrm{d}t\\*
&= -(n\nu)^{-1}\, \langle u^{(1)},m_{1,k}^\circ\rangle\,
S_{1,k}(\tau_{1,k}^\circ)\, \bigl(1+O({n}^{-1})\bigr)
\end{align*} 
and
\[
\int_{\tau_{2,k}^\circ}^{\tau_{k}(\vartheta_u^{(2)})} S_{2,k}(t)\, \mathrm{d}t
= -(n\nu)^{-1}\, \langle u^{(2)},m_{2,k}^\circ\rangle\,
S_{2,k}(\tau_{2,k}^\circ)\, \bigl(1+O({n}^{-1})\bigr).
\]

For stochastic integrals, similar calculations provide
\[
\int_{\tau_{1,k}^\circ}^{\tau_{k}(\vartheta_u^{(1)})}
\ln\frac{\lambda_0}{S_{1,k}(t)+\lambda_0}\, \mathrm{d}X_k(t) =
\ln\frac{\lambda_0}{S_{1,k}(\tau_{1,k}^\circ)+\lambda_0}\,
\bigl[X_k\bigl(\tau_{k}(\vartheta_u^{(1)})\bigr) - X_k(\tau_{1,k}^\circ)\bigr]
\bigl(1+O(n^{-1})\bigr).
\]
Further
\begin{align*}
X_k\bigl(\tau_{k}(\vartheta_u^{(1)})\bigr) - X_k(\tau_{1,k}^\circ) &=
X_k\bigl(\tau_{1,k}^\circ - n^{-1} \nu ^{-1} \langle
u^{(1)},m_{1,k}^\circ\rangle\bigr) - X_k(\tau_{1,k}^\circ)\\*
&\qquad + X_k\bigl(\tau_{k}(\vartheta_u^{(1)})\bigr) -
X_k\bigl(\tau_{1,k}^\circ - n^{-1} \nu ^{-1} \langle
u^{(1)},m_{1,k}^\circ\rangle\bigr)\\*
&= x_{1,k}^+\bigl(-\nu^{-1} \langle u^{(1)},m_{1,k}^\circ\rangle\bigr) +
O(n^{-1/2}).
\end{align*}
Here $x_{1,k}^+\bigl(-\nu^{-1}\, \langle u^{(1)},m_{1,k}^\circ\rangle\,\bigr)
= X_k\bigl(\tau_{1,k}^\circ - n^{-1} \nu^{-1} \langle
u^{(1)},m_{1,k}^\circ\rangle\bigr) - X_k(\tau_{1,k}^\circ)$ is a Poisson
random process and the last estimate was obtained as follows:
\begin{align*}
&\Ex_{\vartheta _0}\bigl[X_k\bigl(\tau_{k}(\vartheta_u^{(1)})\bigr) -
    X_k\bigl(\tau_{1,k}^\circ - n^{-1} \nu ^{-1} \langle
    u^{(1)},m_{1,k}^\circ\rangle\bigr)\bigr]^2\\*
&\qquad = n \int_{\tau_{k}(\vartheta_u^{(1)})}^{\tau_{1,k}^\circ - n^{-1}
    \nu^{-1} \langle u^{(1)},m_{1,k}^\circ\rangle} \lambda_k(\vartheta_0,t)\,
  \mathrm{d}t + \biggl(n \int_{\tau_{k}(\vartheta_u^{(1)})}^{\tau_{1,k}^\circ
    - n^{-1} \nu^{-1} \langle u^{(1)},m_{1,k}^\circ\rangle}
  \lambda_k(\vartheta_0,t)\, \mathrm{d}t\biggr)^2\\*
&\qquad \leq \frac{C}{n}\,.
\end{align*}
Hence, if $u^{(1)} \in \BB_{1,k}$ and $u^{(2)} \in \BB_{2,k}$, it holds
\begin{align*}
&\int_{0}^{T} \ln\frac{\lambda_k(\vartheta_0 + n^{-1}
    u,t)}{\lambda_k(\vartheta_0,t)}\, \mathrm{d}X_k(t) =
  \ln\biggl(\frac{\lambda_0}{S_{1,k}(\tau_{1,k}^\circ) + \lambda_0}\biggr)\,
  x_{1,k}^+\bigl(-\nu^{-1} \langle u^{(1)},m_{1,k}^\circ\rangle\bigr)\\*
&\qquad\qquad
  +\ln\biggl(\frac{\lambda_0}{S_{2,k}(\tau_{2,k}^\circ)+\lambda_0}\biggr)\,
  x_{2,k}^+\bigl(-\nu^{-1} \langle u^{(2)},m_{2,k}^\circ\rangle\bigr) +
  O(n^{-1/2}).
\end{align*}
If $u^{(1)} \in \BB_{1,k}^c$ and $u^{(2)} \in \BB_{2,k}^c$, we have
\begin{align*}
&\int_{0}^{T} \ln\frac{\lambda _k(\vartheta_0 + n^{-1}
    u,t)}{\lambda_k(\vartheta_0,t)}\, \mathrm{d}X_k(t) = \ln\biggl(1 +
  \frac{S_{1,k}(\tau_{1,k}^\circ)}{\lambda_0}\biggr)\,
  x_{1,k}^-\bigl(-\nu^{-1} \langle u^{(1)},m_{1,k}^\circ\rangle\bigr)\\*
&\qquad\qquad + \ln\biggl(1 +
  \frac{S_{2,k}(\tau_{2,k}^\circ)}{\lambda_0}\biggr)\, x_{2,k}^-\bigl(\nu
  ^{-1} \langle u^{(2)},m_{2,k}^\circ\rangle\bigr) + O(n^{-1/2}).
\end{align*}
\end{proof}

\begin{lemma}
There exists a constant\/ $C>0$ such that
\[
\Ex_{\vartheta_0}\bigl|Z_n(u)^{1/2} - Z_n(u')^{1/2}\bigr|^2 \leq C\,
   {\bil\|u-u'\bir\|}_4.
\]
\end{lemma}

\begin{proof}
We have (see~\eqref{l2})
\[
\sup_{\vartheta_0\in\KK} \Ex_{\vartheta_0}\bigl|Z_n(u)^{1/2} -
Z_n(u')^{1/2}\bigr|^2 \leq \sum_{k=1}^{K} n \int_{0}^{T}
\bigl[\lambda_k(\vartheta_{u},t) - \lambda_k(\vartheta_{u'},t)\bigr]^2\,
\mathrm{d}t.
\]

Suppose that $\tau_{k}(\vartheta_u^{(1)}) > \tau_{k}(\vartheta_{u'}^{(1)})$
and $\tau_{k}(\vartheta_u^{(2)}) > \tau_{k}(\vartheta_{u'}^{(2)}) >
\tau_{k}(\vartheta_u^{(1)})$.  Then
\begin{align*}
&n \int_{0}^{T} \Bigl[S_{1,k}(t)\, \1_{\{\tau_{k}(\vartheta_{u'}^{(1)}) \leq t
      \leq \tau_{k}(\vartheta_u^{(1)})\}} + S_{2,k}(t)\,
    \1_{\{\tau_{k}(\vartheta_{u'}^{(2)}) \leq t \leq
      \tau_{k}(\vartheta_u^{(2)})\}}\Bigr]^2\, \mathrm{d}t\\*
&\qquad\qquad = n
  \int_{\tau_{k}(\vartheta_{u'}^{(1)})}^{\tau_{k}(\vartheta_u^{(1)})}
  S_{1,k}^2(t)\, \mathrm{d}t + n
  \int_{\tau_{k}(\vartheta_{u'}^{(2)})}^{\tau_{k}(\vartheta_u^{(2)})}
  S_{2,k}^2(t)\, \mathrm{d}t\\
&\qquad\qquad \leq C n \bigl(\tau_{k}(\vartheta_u^{(1)}) -
  \tau_{k}(\vartheta_{u'}^{(1)})\bigr) + C n \bigl(\tau_{k}(\vartheta_u^{(2)})
  - \tau_{k}(\vartheta_{u'}^{(2)})\bigr)\\
&\qquad\qquad \leq C \Bigl(\bigl|\langle u'^{(1)} -
  u^{(1)},m_{1,k}^\circ\rangle\bigr| + \bigl|\langle u'^{(2)} -
  u^{(2)},m_{2,k}^\circ\rangle\bigr|\Bigr)\\*
&\qquad\qquad \leq C\bigl({\bil\|u'^{(1)} - u^{(1)}\bir\|}_2 + {\bil\|u'^{(2)}
    - u^{(2)}\bir\|}_2\bigr) \leq C\, {\bil\|u'-u\bir\|}_4.
\end{align*}
\end{proof}

\begin{lemma}
\label{L6} 
There exists a constant\/ $\kappa >0$ such that
\[
\Ex_{\vartheta _0} Z_n(u)^{1/2} \leq e^{-\kappa\, {\bil\|u\bir\|}_4}.
\] 
\end{lemma}

\begin{proof}
We have
\begin{align*}
\ln\Ex_{\vartheta_0}Z_n(u)^{1/2} &= -\frac{1}{2}\, \sum_{k=1}^{K}\int_{0}^{T}
\biggl[\sqrt{\lambda_{k,n}(\vartheta_u,t)} -
  \sqrt{\lambda_{k,n}(\vartheta_0,t)}\biggr]^2\, \mathrm{d}t\\*
&\leq -C \sum_{k=1}^{K}\int_{0}^{T} \bigl[\lambda_{k,n}(\vartheta_u,t) -
  \lambda_{k,n}(\vartheta_0,t)\bigr]^2\, \mathrm{d}t.
\end{align*}
Suppose that $\tau_{k}(\vartheta_u^{(i)}) > \tau_{i,k}^\circ$,
$\tau_{2,k}^\circ > \tau_{k}(\vartheta_u^{(1)}) $ and ${\bil\|\vartheta_u -
  \vartheta_0\bigr\|}_4 \leq \varepsilon$.  Then
\begin{align*}
&\int_{0}^{T} \bigl[\lambda_{k,n}(\vartheta_u,t) -
    \lambda_{k,n}(\vartheta_0,t)\bigr]^2\, \mathrm{d}t\\*
&\qquad\qquad \geq c\, n \int_{0}^{T} \Bigl[S_{1,k}(t)\,
    \1_{\{\tau_{1,k}^\circ < t < \tau_{k}(\vartheta_u^{(1)})\}} + S_{2,k}(t)\,
    \1_{\{\tau_{2,k}^\circ < t < \tau_{k}(\vartheta_u^{(2)})\}}\Bigr]^2\,
  \mathrm{d}t\\
&\qquad\qquad \geq c\, n
  \biggl[\int_{\tau_{1,k}^\circ}^{\tau_{k}(\vartheta_u^{(1)})} S^2_{1,k}(t)\,
    \mathrm{d}t + \int_{\tau_{2,k}^\circ}^{\tau_{k}(\vartheta_u^{(2)})}
    S^2_{2,k}(t)\, \mathrm{d}t\biggr]\\
&\qquad\qquad \geq c\, n \bigl[\tau_{k}(\vartheta_u^{(1)}) - \tau_{1,k}^\circ
    + \tau_{k}(\vartheta_u^{(2)}) - \tau_{2,k}^\circ\bigr]\\
&\qquad\qquad \geq c \bigl[-\langle u^{(1)},m_{1,k}^\circ\rangle - \langle
    u^{(2)},m_{2,k}^\circ\rangle\bigr]\\*
&\qquad\qquad \geq c\, \frac{-\langle u^{(1)},m_{1,k}^\circ\rangle - \langle
    u^{(2)},m_{2,k}^\circ\rangle}{{\bil\|u\bir\|}_4}\, {\bil\|u\bir\|}_4 =
  -c\, \sqrt{2}\, \langle \mathrm{e},\mathrm{m}_k\rangle\, {\bil\|u\bir\|}_4.
\end{align*}
Here $-\langle \mathrm{e},\mathrm{m}_k\rangle > 0$ and the vectors are
\[
\mathrm{e} = \frac{u}{{\bil\|u\bir\|}_4},\quad {\bil\|\mathrm{e}\bir\|}_4 =
1,\quad \mathrm{m}_k = \biggl(\frac{x_k-x_1'^\circ}{\sqrt{2}\rho_k}\, ,
\frac{y_k-y_1'^\circ}{\sqrt{2}\rho_k}\, ,
\frac{x_k-x_2'^\circ}{\sqrt{2}\rho_k}\, ,
\frac{y_k-y_2'^\circ}{\sqrt{2}\rho_k}\biggr), \quad
     {\bil\|\mathrm{m}_k\bir\|}_4 = 1.
\]
Note that
\[
\inf_{{\bil\|\mathrm{e}\bir\|}_4=1} \sum_{k=1}^{K} \bigl(-\langle
\mathrm{e},\mathrm{m}_k\rangle\bigr) = \bar c > 0.
\]
Indeed, if for some $\mathrm{e}$ we have $\bar c=0$, then all scalar products
$\langle u^{(i)},m_{i,k}^\circ\rangle = 0$, $i=1,2$, $k=1,\ldots,K$, but such
configuration of detectors is impossible.

Let ${\bil\|\vartheta_u - \vartheta_0\bir\|}_4 > \varepsilon$.  Then as in the
proof of Lemma~\ref{L3} we obtain the estimate
\[
\sum_{k=1}^{K}\int_{0}^{T} \bigl[\lambda_{k,n}(\vartheta_u,t) -
  \lambda_{k,n}(\vartheta_0,t)\bigr]^2\, \mathrm{d}t \geq n\, g(\varepsilon)
\geq D^{-1}\, {\bil\|u\bir\|}_4 = \tilde c\, {\bil\|u\bir\|}_4.
\]
Therefore
\[
\sum_{k=1}^{K}\int_{0}^{T} \biggl[\sqrt{\lambda_{k,n}(\vartheta_u,t)} -
  \sqrt{\lambda_{k,n}(\vartheta_0,t)}\biggr]^2\, \mathrm{d}t \geq 2\kappa\,
    {\bil\|u\bir\|}_4,
\]
with a corresponding $\kappa>0$.
\end{proof}

Once more, according to Theorem~1.10.2 of~\cite{IH81}, the properties of the
normalized likelihood ratio $Z_n(\cdot)$ established in Lemmas
\ref{L4}--\ref{L6} provide us the properties of the BEs stated in
Theorem~\ref{T3}.
\end{proof}

\section{On identifiability}

Recall that we consider the situation where $K\in\mathbb{N}^*$ detectors
$\DD_1,\ldots,\DD_K\in\mathbb{R}^2$ receive signals from two sources
$\SS_1,\SS_2\in\mathbb{R}^2$.  In this section we make no distinction between
detectors/sources and their positions, and we suppose that the two signals
received by a detector are identical, i.e., we have $S_{1,k}(\cdot) =
S_{2,k}(\cdot) = S_k(\cdot)$ in the right hand side of~\eqref{signals}, and so
the signals recorded by the $k$-th detector are
\[
S_{i,k}(\vartheta^{(i)},t) = \psi(t-\tau_{i,k})\, S_k(t),\qquad i=1,2.
\]

Considering the identifiability condition $\mathscr{R}_4$, we can see that if
for some $\vartheta \in\Theta $ and some $\varepsilon>0$ satisfying
${\bil\|\vartheta - \vartheta_0\bir\|}_4 \geq \varepsilon >0$ we have
\[
\sum_{k=1}^{K}\int_{0}^{T} \bigl[\psi(t-\tau_{1,k})\, S_k(t) +
  \psi(t-\tau_{2,k})\, S_k(t) - \psi(t-\tau_{1,k}^\circ)\, S_k(t) -
  \psi(t-\tau _{2,k}^\circ)\, S_k(t)\bigr]^2\, \mathrm{d}t = 0,
\]
then, taking into account that $S_k(t)>0$ and that $\psi(t)=0$ for $t<0$ and
$\psi(t)\ne 0$ for~$t>0$, we should have (for each $k$) either
\[
\tau_{1,k}=\tau_{1,k}^\circ \quad\text{and}\quad \tau_{2,k}=\tau_{2,k}^\circ
\]
or
\[
\tau_{1,k}=\tau_{2,k}^\circ \quad\text{and}\quad \tau_{2,k}=\tau_{1,k}^\circ.
\]
Of course, in such situation the consistent estimation is impossible, because
for two different values of the unknown parameter we have the same statistical
model.

We see that the question of identifiability is reduced to the following one:
\textit{when having the distances from each detector to the two sources
  (without knowing which distance corresponds to which source) is it possible
  to find\/ $\vartheta$?} So, the system will be identifiable if and only if
there exist no two different pairs of sources providing the same $K$ pairs of
distances to the detectors.

We introduce the following definition.

\begin{definition}
We say that\/ $n\in\mathbb{N}^*$ points\/ $A_1,\ldots,A_n\in\mathbb{R}^2$
``lay on a cross'' if there exist a pair of orthogonal lines\/~$\ell_1$
and\/~$\ell_2$ such that\/ $A_i\in \ell_1\cup\ell_2$ for all\/~$i=1,\ldots,n$.
\end{definition}

Now we can state the theorem providing a necessary and sufficient condition
for identifiability.

\begin{theorem}
A sufficient condition for a system with\/ $K$ detectors to be identifiable is
that the detectors do not lay on a cross.

In absence of restrictions on the positions of the sources (i.e., if we could
suppose that\/~$\Theta = \mathcal{R}^4$) this condition would be also
necessary.
\end{theorem}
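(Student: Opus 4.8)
The plan is to use the reduction, already established above, that the system fails to be identifiable precisely when there exist two \emph{different} unordered pairs $\{S_1,S_2\}$ and $\{S_1',S_2'\}$ that produce, at every detector $D_k$, the same unordered pair of distances. Since distances are nonnegative, this is equivalent to equality of the two-element multisets of \emph{squared} distances, and a two-element multiset of reals is determined by its sum and its product. Thus the defining condition becomes, for every $k=1,\dots,K$, the equality of the sums $\|D_k-S_1\|^2+\|D_k-S_2\|^2=\|D_k-S_1'\|^2+\|D_k-S_2'\|^2$ together with the analogous equality of the products. The decisive point is that, although each sum of squared distances is a quadratic function of the detector position, the \emph{difference} of the two candidate sums has its quadratic part cancelled and is therefore affine; this is what makes the admissible locus of detectors transparent.

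For the sufficiency, in the contrapositive form ``non-identifiable $\Rightarrow$ the detectors lie on a cross'', I would first exploit the sum condition. Writing $f(D)=\|D-S_1\|^2+\|D-S_2\|^2$ and likewise $f'$, the difference
\[
f(D)-f'(D)=-2\langle D,(S_1+S_2)-(S_1'+S_2')\rangle+\bigl(\|S_1\|^2+\|S_2\|^2-\|S_1'\|^2-\|S_2'\|^2\bigr)
\]
is affine in $D$ and vanishes at every $D_k$. If $S_1+S_2\neq S_1'+S_2'$, its linear part is nonzero, so all $D_k$ lie on a single line, which is contained in a cross, and we are done. Otherwise $S_1+S_2=S_1'+S_2'=:2m$ and the vanishing of the constant part forces $\|S_1\|^2+\|S_2\|^2=\|S_1'\|^2+\|S_2'\|^2$. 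Translating by $-m$ (distances are unchanged), I may assume the common midpoint is the origin, so the pairs are antipodal, $\{S,-S\}$ and $\{S',-S'\}$, and the sum condition becomes $\|S\|=\|S'\|$. The product condition, combined with the already-matched sums, is equivalent to equality of the squares of the within-pair differences of squared distances; since $\|D-S\|^2-\|D+S\|^2=-4\langle D,S\rangle$, it reduces to $\langle D_k,S\rangle^2=\langle D_k,S'\rangle^2$, that is
\[
\langle D_k,S-S'\rangle\,\langle D_k,S+S'\rangle=0\qquad(k=1,\dots,K).
\]
Because the pairs are different, both $S-S'$ and $S+S'$ are nonzero, so each $D_k$ lies on one of the two lines $\ell_1=(S-S')^\perp$, $\ell_2=(S+S')^\perp$ through the origin; and $\langle S-S',S+S'\rangle=\|S\|^2-\|S'\|^2=0$ gives $\ell_1\perp\ell_2$. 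Hence the detectors lie on a cross.

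For the necessity I would use the freedom $\Theta=\mathcal{R}^4$ to build an explicit counterexample. After a rigid motion (which preserves all distances and carries the cross to the coordinate axes), I may assume every detector satisfies $x_k=0$ or $y_k=0$. Choosing any $a\neq0$, $b\neq0$, I would take the pairs $\{(a,b),(-a,-b)\}$ and $\{(a,-b),(-a,b)\}$, which are genuinely different since the four points are distinct. A direct check shows that on the $x$-axis both pairs yield the squared distances $\{(x_k-a)^2+b^2,\ (x_k+a)^2+b^2\}$, while on the $y$-axis both yield $\{a^2+(y_k-b)^2,\ a^2+(y_k+b)^2\}$; the distance data therefore coincide at every detector, so the configuration is not identifiable. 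For generic $a,b$ the four points avoid the detectors, and pulling the construction back through the rigid motion produces two admissible source pairs for the original configuration.

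The main obstacle is the branch $S_1+S_2=S_1'+S_2'$ of the sufficiency argument: one must notice that after normalizing the midpoint to the origin the sum condition is exactly $\|S\|=\|S'\|$, and that this same equality is precisely what turns the two lines produced by the product condition into an \emph{orthogonal} pair. The remaining work is bookkeeping, namely checking that distinct pairs force $S\pm S'\neq0$ (so both lines are genuine) and discarding the degenerate case $S_1=S_2$, which the equality $\|S\|=\|S'\|$ collapses back to a single pair.
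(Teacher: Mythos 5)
Your proof is correct, and the sufficiency half takes a genuinely different route from the paper's. The paper argues synthetically: it runs a case analysis (coinciding sources, a shared source, all four sources distinct) and locates the detectors on perpendicular bisectors $b_{\SS_1\SS_1'}$, $b_{\SS_2\SS_2'}$, etc., with the cross emerging only in the final subcase where $\SS_1\SS_1'\SS_2\SS_2'$ is forced to be a rectangle. You instead encode ``same unordered pair of distances'' via the elementary symmetric functions of the squared distances, observe that the difference of the sum conditions is \emph{affine} in $D$ (so either all detectors are collinear or the pairs share centroid and moment), and then reduce the product condition to $\langle D_k,S-S'\rangle\langle D_k,S+S'\rangle=0$ with orthogonality of the two lines falling out of $\|S\|=\|S'\|$. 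This is more uniform --- the paper's separate cases $\SS_1=\SS_2$ and $\SS_1=\SS_1'$ are absorbed into your first (collinear) branch or killed by $\|S\|=\|S'\|$, as you note --- and it produces the cross by a single computation rather than by elimination; the price is that it is less transparent about \emph{which} source configurations force which detector loci (a point pair, a single line, or a true cross), which the paper's pictures make explicit. Your necessity construction (reflect a generic point in the two axes and in their intersection) is the same as the paper's, phrased in coordinates. One small point worth making explicit if you write this up: in the branch $S_1+S_2=S_1'+S_2'$ you need at least one detector to force the constant term of the affine function to vanish, which is harmless since the statement is vacuous for $K=0$.
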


Before proving this theorem, let us note that at least $4$ detectors are
necessary for the system to be identifiable.  Indeed, any three detectors
$\DD_1,\DD_2,\DD_3$ lay on a cross (take, for example, the line
$\overleftrightarrow{\,\DD_1 \DD_2}$ and the perpendicular to this line passing
by~$\DD_3$).

Note also that $4$ detectors do not lay on a cross if (and only if) they are
in general linear position (any $3$ of them are not aligned) and cannot be
split in two pairs such that the lines passing by these pairs are orthogonal.

Finally, note that if $5$ (or more) detectors are in general linear position,
then they necessarily do not lay on a cross (and hence the system is
identifiable).  Indeed, if they laid on a cross, at least one of the lines
forming the cross would contain at least three of them, and so, they would not
be in general linear position.

\begin{proof}
Note that if the system is identifiable with $\Theta = \mathcal{R}^4$, it will
be also identifiable with any $\Theta \subset \mathcal{R}^4$.  So we can
suppose $\Theta = \mathcal{R}^4$ and reformulate the theorem using the
contrapositions as follows: \textit{the detectors $\DD_1,\ldots,\DD_K$ lay on
  a cross if and only if there exist two different pairs of sources providing
  the same\/ $K$ pairs of distances to them}.

In order to show the necessity, we suppose that the detectors $\DD_1,\ldots,\DD_K$
lay on a cross formed by a pair of orthogonal line~$\ell_1$ and~$\ell_2$, and
we need to find two different pairs of sources~$\{\SS_1,\SS_2\}$
and~$\{\SS_1',\SS_2'\}$ providing the same $K$ pairs of distances to the
detectors.

Let us denote $O$ the point of intersection of~$\ell_1$ and~$\ell_2$, take an
arbitrary point $\SS_1$ not belonging to neither~$\ell_1$, nor~$\ell_2$, and
denote~$\SS_1'$, $\SS_2'$ and~$\SS_2$ the points symmetric to~$\SS_1$ with respect
to~$\ell_1$, $\ell_2$ and~$O$ respectively (see Fig.~\ref{cross2}).

\begin{figure}[!ht]
\centering
\includegraphics{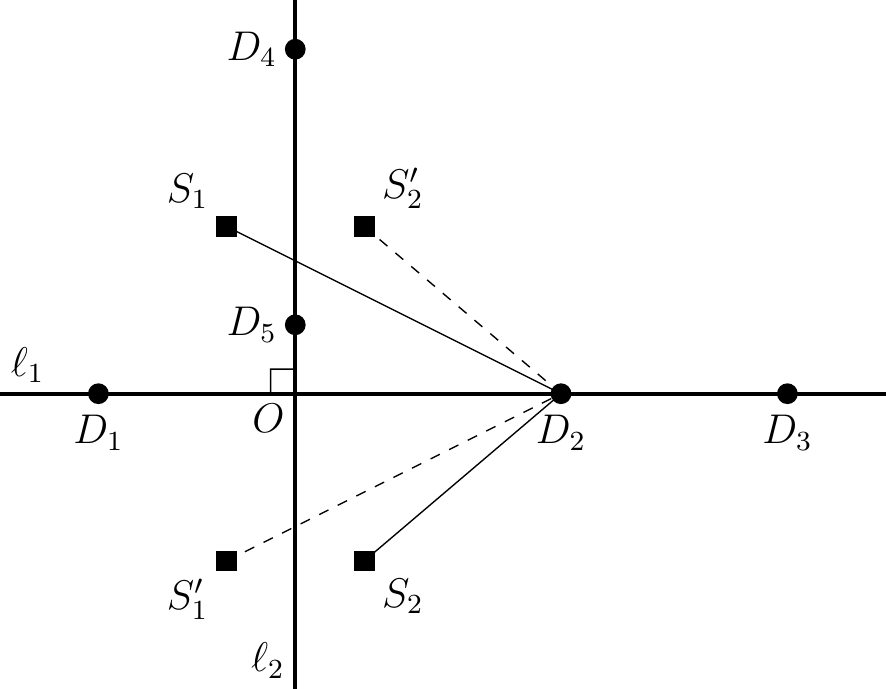}
\caption{Non identifiability for detectors laying on a cross}
\label{cross2}
\end{figure}

Then, clearly, the pairs of sources~$\{\SS_1,\SS_2\}$ and~$\{\SS_1',\SS_2'\}$ provide
the same pair of distances to any point of a cross, and hence the same $K$
pairs of distances to the detectors~$\DD_1,\ldots,\DD_K$.

Now we turn to the proof of the sufficiency.  We suppose that two different
pairs of sources~$\{\SS_1,\SS_2\}$ and~$\{\SS_1',\SS_2'\}$ provide the same $K$ pairs
of distances to the detectors~$\DD_1,\ldots,\DD_K$, and we need to show that the
detectors lay on a cross.

We need to distinguish several cases.

If in one of the two pairs the sources are located at the same point (say
$\SS_1=\SS_2$), then at least one of the points~$\SS_1'$ and~$\SS_2'$ (say $\SS_1'$)
must be different from~$\SS_1$ and~$\SS_2$ (see the left picture in
Fig.~\ref{partcases}).  Then, if our two pairs of sources provide the same
pair of distances to a point $D$, we should have, in particular,
$\rho(D,\SS_1)=\rho(D,\SS_1')$, and hence $D\in b_{\SS_1 \SS_1'}$.  Here and in the
sequel, we denote~$\rho$ the Euclidean distance, and for any two distinct
points~$A$ and~$B$, we denote~$b_{AB}$ the perpendicular bisector of the
segment~$\overline{A B}$.  Therefore, all the detectors must belong to the
line~$b_{\SS_1 \SS_1'}$ (and, in particular, they lay on a cross).

\begin{figure}[!ht]
\centering
\includegraphics{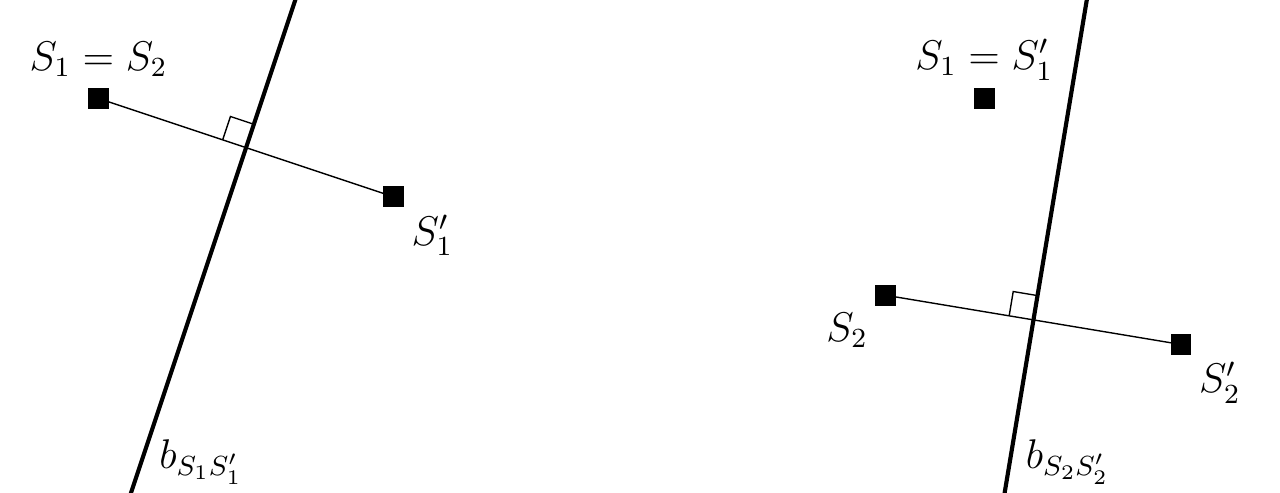}
\caption{Cases with coinciding sources}
\label{partcases}
\end{figure}

So, from now on, we can suppose that $\SS_1\ne \SS_2$ and $\SS_1'\ne \SS_2'$.

Now we consider the case when the two pairs of sources have a point in common
(say~$\SS_1=\SS_1'$).  Note that in this case, we must have $\SS_2\ne \SS_2'$,
since otherwise the pairs of sources will not be different (see the left
picture in Fig.~\ref{partcases}).  Then, our two pairs of sources provide the
same pair of distances to a point $D$ if and only if
$\rho(D,\SS_2)=\rho(D,\SS_2')$, which is equivalent to $D\in b_{\SS_2
  \SS_2'}$.  Therefore, all the detectors must belong to the line~$b_{\SS_2
  \SS_2'}$ (and, in particular, they lay on a cross).

So, from now on, we can suppose that the points $\SS_1,\SS_2,\SS_1',\SS_2'$ are all
different.  In this case, our two pairs of sources provide the same pair of
distances to a point $D$ if and only if $D$ belongs at the same time either to
the pair of lines~$b_{\SS_1 \SS_1'}$ and~$b_{\SS_2 \SS_2'}$, or to the pair of
lines~$b_{\SS_1 \SS_2'}$ and~$b_{\SS_2 \SS_1'}$ $\bigl($otherwise speaking, if and
only if $D\in (b_{\SS_1 \SS_1'}\cap b_{\SS_2 \SS_2'}) \cup (b_{\SS_1 \SS_2'}\cap b_{\SS_2
  \SS_1'})\bigr)$.  We need again to distinguish several (sub)cases depending on
whether the lines in each of these pairs coincide or not.

First we suppose that~$b_{\SS_1 \SS_1'}\ne b_{\SS_2 \SS_2'}$ and~$b_{\SS_1 \SS_2'}\ne
b_{\SS_2 \SS_1'}$ (see the left picture in Fig.~\ref{gencases}).  Then each of
these pairs of lines meet in at most one point.  So, all the detectors must
belong to a set consisting of at most two points (and, therefore, there is at
most two detectors and they trivially lay on a cross).

\begin{figure}[!ht]
\centering
\includegraphics{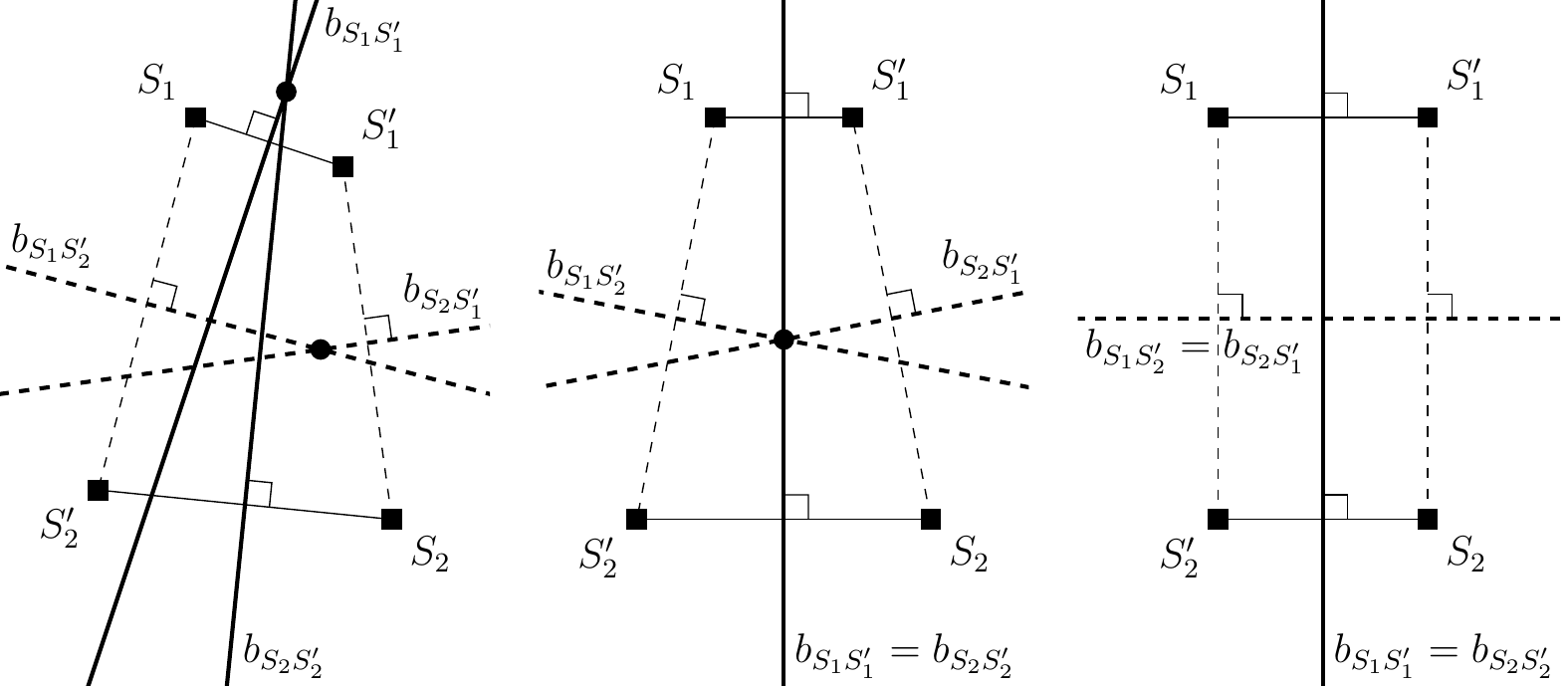}
\caption{Cases without coinciding sources}
\label{gencases}
\end{figure}

Now we suppose that the lines coincide in one of the pairs, and not in the
other (say~$b_{\SS_1 \SS_1'}=b_{\SS_2 \SS_2'}$ and~$b_{\SS_1 \SS_2'}\ne b_{\SS_2 \SS_1'}$).
Note that in this case, the points~$\SS_1$ and~$\SS_1'$, as well as the
points~$\SS_2$ and~$\SS_2'$, are symmetric with respect to the line~$b_{\SS_1
  \SS_1'}$, and hence the lines~$b_{\SS_1 \SS_2'}$ and~$b_{\SS_2 \SS_1'}$ either do not
meet, or meet in a point belonging to~$b_{\SS_1 \SS_1'}$ (see the middle picture
in Fig.~\ref{gencases}).  So, finally all the detectors must belong to the
line~$b_{\SS_1 \SS_1'}$ (and, in particular, they lay on a cross).

It remains to consider the case when~$b_{\SS_1 \SS_1'}=b_{\SS_2 \SS_2'}$ and~$b_{\SS_1
  \SS_2'}=b_{\SS_2 \SS_1'}$ (see the right picture in Fig.~\ref{gencases}).  In
this case, as the segments~$\overline{\SS_1 \SS_1'}$ and~$\overline{\SS_2 \SS_2'}$
have a common perpendicular bisector, they must, in particular, be parallel
(including the case where they lay on a same line).  The same reasoning
applies to the segments~$\overline{\SS_1 \SS_2'}$ and~$\overline{\SS_2 \SS_1'}$.  Now,
the case when the points $\SS_1,\SS_1',\SS_2,\SS_2'$ lay on a same line is impossible,
since these points being all different, we clearly can not have common
perpendicular bisectors at the same time for~$\overline{\SS_1 \SS_1'}$
and~$\overline{\SS_2 \SS_2'}$ and for~$\overline{\SS_1 \SS_2'}$ and~$\overline{\SS_2
  \SS_1}$.  Thus, $\SS_1 \SS_1' \SS_2 \SS_2'$ is a parallelogram.  Moreover, as its
opposite sides have common perpendicular bisectors, $\SS_1 \SS_1' \SS_2 \SS_2'$ is
necessarily a rectangle.  But in this case, the perpendicular
bisectors~$b_{\SS_1 \SS_1'}$ and~$b_{\SS_1 \SS_2'}$ are orthogonal.  Hence, as all the
detectors belong to~$b_{\SS_1 \SS_1'}\cup b_{\SS_1 \SS_2'}$, they lay again on a
cross.
\end{proof}

\section{Discussions}

The considered models in the cases of cusp and change-point singularities can
be easily generalized to the models with signals
$s_{i,k}\bigl(t-\tau_k(\vartheta)\bigr)$.  The proofs will be more cumbersome
but the rates and the limit distributions of the studied estimators will be
the same.

Remark that the same mathematical models are used in the applications related
to the detection of weak optical signals from two sources.

Of course, it will be interesting to see the conditions of identifiability in
the situation where the beginning of the emissions of these sources are
unknown and have to be estimated together with the positions.  In the case of
one source such problem was discussed in the work~\cite{CDFK22}.

All useful information about the position of sources, according to the
statements of this work, is contained in the times of arrival of the signals
$\tau_{1,k}(\vartheta)$, $\tau_{2,k}(\vartheta)$, $k=1,\ldots,K$.  It is
possible to study another statement of the problem supposing that these
moments are estimated separately by observations $X_k=(X_k(t),\ 0\leq t\leq
T)$ for each $k$, say by $\hat\tau_{i,k,n} = \hat\tau _{i,k,n}(X_k)$, and then
the positions of the sources are estimated on the base of the obtained
estimators $\hat\tau _{i,k,n}$, $i=1,2$, $k=1,\ldots,K$.  Such approach was
considered in the works~\cite{CDFK22,CK20} (see as well~\cite{Kut22}).

Note that in the works on Poisson source localization, the intensities of
Poisson processes are sometimes taken in the form
\[
\lambda_k(\vartheta,t) = F\bigl(\rho_k(\vartheta)\bigr)\, S_k(t) + \lambda_0
,\qquad 0\leq t\leq T,
\]
where $F(\cdot)$ is a known strictly decreasing function of the distance
$\rho_k$ between the source and the $k$-th detector.  The developed in the
present work approach (smooth case) can be applied for such models too,
because here all the useful information is contained in the distances
$\rho_k(\vartheta)$.

\bigskip
\noindent
\textbf{Acknowledgments.}
This research was financially supported by the Ministry of Education and
Science of Russian Federation (project No.~FSWF-2023-0012) for sections~2.1
and~2.2, and by the Russian Science Foundation (project No.~20-61-47043) for
sections~2.3 and~2.4.

\bigskip
\noindent
\textbf{Conflict of interest.}
On behalf of all authors, the corresponding author states that there is no
conflict of interest.

\end{document}